\numberwithin{equation}{section}
\author{Zhenghui Huo}
\title{$L^p$ estimates for the Bergman projection on some Reinhardt domains}
\begin{document}
	\address{Dept. of Mathematics, Washington University in St. Louis, 1 Brookings Dr., St. Louis MO 63130}
	\email{huo@math.wustl.edu}
		\newtheorem{thm}{Theorem}[section]
	\newtheorem{cl}[thm]{Claim}
	\newtheorem{lem}[thm]{Lemma}
		\newtheorem*{Rmk*}{Remark}
	\newtheorem{ex}[thm]{Example}
	\newtheorem{de}[thm]{Definition}
	\newtheorem{co}[thm]{Corollary}
	\newtheorem*{thm*}{Theorem}
	\maketitle
\begin{abstract}
	We obtain $L^p$ regularity for the Bergman projection on some Reinhardt domains. We start with a bounded initial domain $\Omega$ with some symmetry properties and generate  successor domains in higher {dimensions}. We prove: If the Bergman kernel on $\Omega$ satisfies appropriate estimates, then the Bergman projection on the successor is $L^p$ bounded. For example, the Bergman projection on successors of strictly pseudoconvex initial domains is bounded on $L^p$ for $1<p<\infty$. The successor domains need not have smooth boundary nor be strictly pseudoconvex.

\medskip

\noindent
{\bf AMS Classification Numbers}: 32A25, 32A36, 32A07

\medskip

\noindent
{\bf Key Words}: Bergman projection, Bergman kernel, $L^p$ boundedness, Reinhardt domain
\end{abstract}

\section{Introduction}
The purpose of this paper is to establish $L^p$ regularity for the Bergman projection on certain domains. In \cite{Zhenghui}, {the author} began with an initial domain with certain symmetry properties. From this initial domain {the author} constructed various successor domains and computed (explicitly) the Bergman kernel on them in terms of the Bergman kernel on the initial domain.

Let $\Omega$ be an initial domain {in $\mathbb C^n$}. We consider two kinds of estimates on the Bergman kernel $K_{\Omega}$. {A first estimate} implies $L^p$ regularity of the Bergman projection on $\Omega$. If, also, {a second estimate} holds, then we obtain $L^p$ regularity of the Bergman projection on the successor domain. See Theorem 1.2.
{We use a variant of Schur's Lemma to establish $L^p$ regularity.  We state the crucial estimates in Theorem 3.3 and give the proof in Section 4.}

Let $\Omega\subseteq \mathbb C^n$ be a bounded domain. The Bergman projection is the orthogonal projection from $L^2(\Omega)$ onto the closed subspace of square-integrable holomorphic functions, and thus is bounded on $L^2$.
It is natural to ask when {this operator} is bounded on $L^p$ for $p\neq 2$. Using known estimates for the Bergman kernel, various authors have obtained $L^p$ regularity results for $1<p<\infty$ in the following settings:
\begin{enumerate}
	\item $\Omega$ is bounded, smooth, and strongly pseudoconvex. See \cite{Fefferman,PS}.
	\item $\Omega\subseteq\mathbb C^2$ is a domain of finite type. See \cite{McNeal1,McNeal3,NRSW}.
	\item $\Omega\subseteq\mathbb C^n$ is a convex domain of finite type. See \cite{McNeal3,McNeal2,MS}.
	\item $\Omega\subseteq\mathbb C^n$ is a domain of finite type with locally diagonalizable Levi form. See \cite{CD}.
\end{enumerate}
Progress has also been made on some domains with weaker assumption on boundary regularity. {In some cases,  the Bergman projection is $L^p$ bounded for $1<p<\infty$, See \cite{EL,LS}. For other domains, the projection has only a finite range of mapping regularity. See \cite{Yunus,DebrajY,EM,EM2,CHEN}. There are also smooth bounded domains where the projection has limited $L^p$ range. See \cite{BS}.} 

We start with a bounded {complete Reinhardt} domain $\Omega$ in $\mathbb C^n$ with a defining function $\rho$, and analyze the $L^p$ regularity of the Bergman projection on the successor domains {$U^{\alpha}(\Omega)$} defined by 
\begin{equation}\label{00}
U^{\alpha}(\Omega)=\left\{(z,w)\in \mathbb C^{n}\times \mathbb B^k:\left(\frac{z_1}{(1-\|w\|^2)^{\frac{\alpha_1}{2}}},\dots,\frac{z_n}{(1-\|w\|^2)^{\frac{\alpha_n}{2}}}\right)\in \Omega\right\}.
\end{equation}
{Here $\mathbb B^k$ is the unit ball in $\mathbb C^k$ and $\alpha=(\alpha_1,\cdots,\alpha_n)$ with each $\alpha_j$ greater than 0. We will often use $U^{\alpha}$ to denote $U^{\alpha}(\Omega)$.}

 For each multi-index $\beta$,  let $D^{\beta}_z$ denote the differential operator $(\frac{\partial}{\partial z_1})^{\beta_1}\cdots (\frac{\partial}{\partial z_n})^{\beta_n}$. {Given} {functions} of several variables $f$ and $g$, we use $f\lesssim g$ to denote that $f\leq Cg$ for a constant $C$. If $f\lesssim g$ and $g\lesssim f$, then we say $f$ is comparable to $g$ and write $f\simeq g$. 
 
 {Next we introduce the estimates needed for the derivatives of the Bergman kernel on $\Omega$.}
\begin{de}
Let $\Omega$ be a domain in $\mathbb C^n$. Let $h$ be a positive function on $\Omega$. A kernel $K$ on $\Omega\times\Omega$ is $h$-regular of type $l$ if there exists $a>0$ such that for all $\epsilon\in (0,a)$, we have
\begin{equation}\label{71}
\int_{\Omega}\left|K(z ;\zeta )\right|h^{-\epsilon}(\zeta)dV(\zeta )\lesssim h^{-\epsilon-l}{(z)}.
\end{equation}
\end{de}
Now we are ready to state our main theorem:
\begin{thm}
Let $\rho$ be a defining function for $\Omega\subseteq \mathbb C^n$ and let $U^{\alpha}\mathbb\subseteq \mathbb C^{n+k} $ be as in (\ref{00}). Suppose the Bergman kernel $K_{\Omega}$ satisfies the following two properties:
	\begin{enumerate}
		\item $K_{\Omega}$ is $(-\rho)$-regular of type $0$.
		\item $D^{\beta}_zK_{\Omega}(z;\bar{\zeta})$ is $(-\rho)$-regular of type $|\beta|$ whenever $|\beta|\leq k$.
	\end{enumerate}
	Then the Bergman projection is bounded on $L^p(U^{\alpha})$ for $p\in (1,\infty)$.
\end{thm}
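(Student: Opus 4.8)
The plan is to combine the explicit description of $K_{U^{\alpha}}$ from \cite{Zhenghui} with a weighted Schur test, so that the two hypotheses on $K_{\Omega}$ become exactly the inputs required. The first step is to recall that the Bergman kernel of the successor can be written as a finite sum, over multi-indices $\beta$ with $|\beta|\le k$, of terms of the form
\begin{equation*}
c_{\beta}\,\frac{g_{\beta}(w,\eta)}{(1-\langle w,\eta\rangle)^{k+1+m_{\beta}}}\,\big(D^{\beta}_{z}K_{\Omega}\big)\!\left(z';\overline{\zeta'}\right),
\end{equation*}
where $z'=\big(z_1/(1-\|w\|^2)^{\alpha_1/2},\dots,z_n/(1-\|w\|^2)^{\alpha_n/2}\big)$ and $\zeta'$ are the rescaled base points appearing in (\ref{00}), and $g_{\beta}$ is bounded. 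The range $|\beta|\le k$ is precisely the set of derivatives produced by differentiating the $(k{+}1)$-st power of the ball factor, which is why hypothesis (2) is imposed exactly for $|\beta|\le k$.

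Second, I would reduce the theorem to a single weighted estimate by invoking the Schur-type lemma. Since the Bergman kernel is conjugate symmetric and the Schur weight is a positive power, the two integral hypotheses of the lemma collapse to one inequality: it suffices to show that $K_{U^{\alpha}}$ is regular of type $0$ with respect to a weight $h$ on $U^{\alpha}$, that is
\begin{equation*}
\int_{U^{\alpha}}\big|K_{U^{\alpha}}\big(z,w;\zeta,\eta\big)\big|\,h^{-\epsilon}(\zeta,\eta)\,dV(\zeta,\eta)\lesssim h^{-\epsilon}(z,w)
\end{equation*}
for all sufficiently small $\epsilon>0$. Here the natural choice is $h=-\rho_{U^{\alpha}}$, a defining function for $U^{\alpha}$; near the boundary $-\rho_{U^{\alpha}}$ is comparable to $(-\rho)(z')\cdot(1-\|w\|^2)$, so the weight factors into a base part and a fiber part. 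Letting $\epsilon$ range over a small interval recovers every $p\in(1,\infty)$, because, given $p$, one chooses the Schur exponent $s$ so that both $sp$ and $sp'$ lie in the admissible interval for $\epsilon$.

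Third, I would prove this estimate by integrating in the base and fiber directions. Substituting the kernel formula and changing variables $\zeta\mapsto\zeta'$ in the base (the Jacobian contributing a power of $1-\|\eta\|^2$), the $\Omega$-integral of the $\beta=0$ term is controlled by hypothesis (1) and of the remaining terms by hypothesis (2); the ``type $|\beta|$'' gain produces exactly the extra boundary singularity in $(-\rho)(z')$ that the derivative term carries. What is left is an integral over $\mathbb{B}^k$ of a Forelli--Rudin type expression $(1-\|\eta\|^2)^{a}\,|1-\langle w,\eta\rangle|^{-b}$, which I would estimate by the standard asymptotics to produce the required power of $1-\|w\|^2$.

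The main obstacle is the bookkeeping forced by the coupling of base and fiber through the rescaling $z\mapsto z'$: the output of the base estimate depends on $w$ through $z'$, and the extra singularity $(-\rho)(z')^{-|\beta|}$ contributed by the $\beta$-th derivative must be balanced, simultaneously for every $|\beta|\le k$, against the powers of $1-\|w\|^2$ coming from the coefficients and from the fiber integration, so that the total collapses to a single power of $-\rho_{U^{\alpha}}$. Arranging the exponents so that all these terms close at once, and so that the fiber integrals stay in the convergent regime of the Forelli--Rudin asymptotics for each admissible $\epsilon$, is the technical heart of the argument; the bound $|\beta|\le k$ tied to the ball dimension is exactly what guarantees this convergence.
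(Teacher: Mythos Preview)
Your overall architecture is right and matches the paper: expand $K_{U^{\alpha}}$ via the formula from \cite{Zhenghui} as a finite sum over $|\beta|\le k$, apply a Schur test with the weight $(1-\|w\|^{2})\,(-\rho)\circ f_{\alpha}$, integrate first in the base variable using the $(-\rho)$-regularity hypotheses, and finish with a Forelli--Rudin estimate in the fiber. Two points, however, need correction.

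First, a small inaccuracy in your kernel formula: after unwinding Theorem~2.2, the first argument of $D^{\beta}_{z}K_{\Omega}$ is not $z'=f_{\alpha}(z,w)$ but $h'(z,w,\eta)$, which depends on $\eta$ through $1-\langle w,\eta\rangle$. Consequently, after the base integration you obtain $\big(-\rho(h'(z,w,\eta))\big)^{-\epsilon-|\beta|}$ sitting \emph{inside} the $\eta$-integral, not a factor you can pull out.

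Second, and this is the real gap: your proposed resolution of the ``main obstacle'' does not close. If one uses the monotonicity of $\rho$ (Lemma~3.2(c)) to bound $\big(-\rho(h'(z,w,\eta))\big)^{-\epsilon-|\beta|}\le\big(-\rho(f_{\alpha}(z,w))\big)^{-\epsilon-|\beta|}$ and then runs Forelli--Rudin, the result is
\[
J_{\beta}\lesssim \big(-\rho(f_{\alpha}(z,w))\big)^{-\epsilon-|\beta|}\,(1-\|w\|^{2})^{-\epsilon},
\]
after absorbing $|z^{\beta}|(1-\|w\|^{2})^{-\alpha\cdot\beta/2}$. The residual factor $\big(-\rho(f_{\alpha}(z,w))\big)^{-|\beta|}$ is unbounded and there is nothing left to cancel it; no amount of exponent bookkeeping in the fiber fixes this, and the restriction $|\beta|\le k$ plays no role in Forelli--Rudin convergence here. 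The paper handles this with a genuinely additional idea: apply the ball automorphism $\varphi_{w}$ to move the distinguished point to the origin, pass to polar coordinates $\tau=rt$, and use the Reinhardt symmetry of $\rho$ (Lemma~3.2(d),(e)) to get the pointwise differential inequality
\[
\big(-\rho(l(z,w,rt))\big)^{-\epsilon-|\beta|}\lesssim -\frac{1-r^{2}}{r}\,\frac{\partial}{\partial r}\big(-\rho(l(z,w,rt))\big)^{-\epsilon-|\beta|+1}.
\]
Iterated integration by parts in $r$ then lowers the exponent from $-\epsilon-|\beta|$ to $-\epsilon$; each step eats a factor $r^{2}$ from the spherical Jacobian $r^{2k-1}$, and it is precisely here that $|\beta|\le k$ is used. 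Your outline is missing this mechanism.
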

We note that Assumption (1) implies that the Bergman projection on $\Omega$ is bounded in $L^p$ for $1<p<\infty$. See Schur's lemma in Section 3. Using estimates for derivatives of the Bergman kernel from \cite{McNeal2,McNeal1,NRSW,PS,CD}, one can show that $D^{\beta}_zK_{\Omega}$ is $(-\rho)$-regular of type $|\beta|$ for all $\beta\in \mathbb N^n$ in {classes of domains} previously mentioned.
In Theorem 1.2, we only require $D^{\beta}_zK_{\Omega}$ to be $(-\rho)$-regular of type $|\beta|$ for all $\beta$ such that $|\beta|\leq k$.

In Section 2, we recall the technique in \cite{Zhenghui} {relating the Bergman kernels of initial domains to those of their successors}. In Section 3, we discuss several lemmas and state Theorem 3.3. {This result is used to prove Theorem 1.2 via Schur's lemma}. We prove Theorem 3.3 in Section 4.

I would like to acknowledge John D'Angelo, Jeff McNeal, Brett Wick and the referee for their suggestions and comments.
\section{A formula for computing the Bergman kernel}
In this section we recall {a construction} from \cite{Zhenghui}{, which produces the Bergman kernel of various higher dimensional successors of an initial domain}. We start with an initial domain $\Omega$ and construct a class of domains  $U^{\alpha}(\Omega)$ by introducing new parameters $\alpha$ to $\Omega$. 

The technique in \cite{Zhenghui} consists of the following 4 steps:
\begin{enumerate}
	\item start with the kernel function $K_{\Omega}$ on the initial domain.
	\item construct a function on $U^{\alpha}(\Omega)\times U^{\alpha}(\Omega)$ by evaluating $K_{\Omega}$ at a point off the diagonal.
	\item define a specific differential operator (depending on $\alpha$).
	\item apply the operator in Step (3) to the function in Step (2), obtaining $K_{U^{\alpha}(\Omega)}$.
\end{enumerate}
The point at which we evaluate in Step (2) and the operator in Step (3) are independent of the initial domain $\Omega$, {but they depend on the parameters $\alpha$}. 

We {recall} in the definition below the notion of ``{complete Reinhardt}'' for the symmetry property the initial domain must satisfy.
\begin{de}
	A domain {$\Omega\subseteq\mathbb C^{n}$} is called {complete Reinhardt} in $(z_1,\dots,z_n)$ if the containment $(z_1,\dots,z_n)\in \Omega$ implies the containment
	$$\{(\lambda_1z_1,\dots,\lambda_n z_n):
	|\lambda_j|\leq 1 \:\:{\rm{{for}}}\:\: 1\leq j\leq n\}\subseteq\Omega.$$ 
\end{de}
Let $\Omega\subseteq \mathbb C^{n}$ be a {complete Reinhardt} domain in $(z_1,\dots,z_n)$. For {$\alpha\in \mathbb R^{n}_+$ and $w\in \mathbb B^k$}, set 
\begin{equation}\label{f}
f_{\alpha}\left(z,w\right)=\left(\frac{z_1}{(1-\|w\|^2)^{\frac{\alpha_1}{2}}},\dots,\frac{z_n}{(1-\|w\|^2)^{\frac{\alpha_n}{2}}}\right).
\end{equation}
The successor $U^\alpha(\Omega)$ is defined by 
{\begin{equation}\label{U}
	U^{\alpha}(\Omega)=\{(z,w)\in \mathbb C^{n}\times \mathbb B^k:f_{\alpha}(z,w)\in \Omega,\|w\|<1\}.
	\end{equation}}
For fixed $w\in \mathbb B^k$, let $U^{\alpha}_w(\Omega)$ denote the slice domain $\{z\in \mathbb C^{n}:(z,w)\in U^{\alpha}\}$ of $U^{\alpha}$. {We will often write $U^{\alpha}_{w}$ to denote $U^{\alpha}_w(\Omega)$. Since the mapping $f_{\alpha}(\cdot,w):z\mapsto f_\alpha(z,w)$ is a biholomorphism from $U^{\alpha}_w(\Omega)$ onto $\Omega$, the kernel on $U^{\alpha}_w(\Omega)$ can be obtained from $K_{\Omega}$.}

The main result in \cite{Zhenghui} relates the Bergman kernel on $U^{\alpha}_w(\Omega)$ to $K_{U^{\alpha}}$. {To state this result, we need a few more {notational definitions}.}
Let $I$ denote the identity operator. We define $D_{U^{\alpha}}$ to be the differential operator:
\begin{equation}\label{D}
D_{U^{\alpha}}=\frac{(1-\|\eta\|^2)^{|\alpha|}}{\pi^k(1- \langle w,\eta\rangle)^{1+k+|\alpha|}}\prod_{l=1}^{k}\left(lI+\sum_{j=1}^{n}\alpha_j\left(I+z_j\frac{\partial}{\partial z_j}\right)\right).
\end{equation}

Let $h(z,w,\eta)$ denote the following:
\begin{equation}\label{h}
h\left(z,w,\eta\right)=\left(z_1\left(\frac{1-\|\eta\|^2}{1-\langle w, \eta\rangle}\right)^{\alpha_1},\dots,z_n\left(\frac{1-\|\eta\|^2}{1-\langle w,\eta\rangle}\right)^{\alpha_n}\right).
\end{equation}

The formula for $K_{U^{\alpha}}$ in \cite{Zhenghui} can be expressed as follows:
\begin{thm}For {$(z,w;\zeta,\eta)\in U^{\alpha}\times U^{\alpha}$}, let $D_{U^{\alpha}}$ and $h(z,w,\eta)$  be as (\ref{D}) and (\ref{h}). Then
{\begin{equation}\label{3}
	K_{U^{\alpha}}\left(z,w;\bar \zeta,\bar \eta\right)=D_{U^{\alpha}}K_{U^{\alpha}_{\eta}}\left(h(z,w,\eta);\bar\zeta\right).
	\end{equation}}
\end{thm}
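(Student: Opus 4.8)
The plan is to exploit the complete Reinhardt symmetry of the whole construction and to verify \eqref{3} by expanding both sides in the monomial basis. Since $\Omega$ is complete Reinhardt in $z$ and the ball $\mathbb B^k$ is complete Reinhardt in $w$, the successor $U^\alpha$ is itself complete Reinhardt in $(z,w)$; hence the monomials $\{z^a w^b\}_{a,b\geq 0}$ form a complete orthogonal system for the Bergman space of $U^\alpha$, and
\begin{equation*}
K_{U^\alpha}(z,w;\bar\zeta,\bar\eta)=\sum_{a,b}\frac{z^a w^b\,\bar\zeta^a\bar\eta^b}{\|z^a w^b\|^2_{U^\alpha}},
\end{equation*}
the series converging locally uniformly. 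Writing $K_\Omega(z;\bar\zeta)=\sum_a z^a\bar\zeta^a/c_a$ with $c_a=\|z^a\|^2_\Omega$, the entire argument becomes a bookkeeping of these norms.

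First I would compute $\|z^a w^b\|^2_{U^\alpha}$ by slicing. For fixed $w$, the map $u=f_\alpha(z,w)$ of \eqref{f} is linear in $z$ and carries $U^\alpha_w$ biholomorphically onto $\Omega$ with constant Jacobian, producing $\int_{U^\alpha_w}|z^a|^2\,dV(z)=(1-\|w\|^2)^{s_a}c_a$, where $s_a:=|\alpha|+\sum_j\alpha_j a_j$. Integrating the remaining $|w^b|^2$ against $(1-\|w\|^2)^{s_a}$ over $\mathbb B^k$ is a standard beta integral, and summing over $b$ for each fixed $a$ reassembles the weighted Bergman kernel of the ball. This yields the closed form
\begin{equation*}
K_{U^\alpha}(z,w;\bar\zeta,\bar\eta)=\frac{1}{\pi^k}\sum_a\frac{z^a\bar\zeta^a}{c_a}\,\frac{\prod_{l=1}^k(l+s_a)}{(1-\langle w,\eta\rangle)^{k+1+s_a}}.
\end{equation*}

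Next I would expand the right side of \eqref{3}. The Bergman transformation formula applied to the biholomorphism $f_\alpha(\cdot,\eta)\colon U^\alpha_\eta\to\Omega$ gives $K_{U^\alpha_\eta}(z;\bar\zeta)=(1-\|\eta\|^2)^{-|\alpha|}\sum_a\frac{z^a\bar\zeta^a}{c_a}(1-\|\eta\|^2)^{-m_a}$, where $m_a:=\sum_j\alpha_j a_j$; substituting the point $h(z,w,\eta)$ of \eqref{h} replaces each $z^a$ by $z^a\big(\frac{1-\|\eta\|^2}{1-\langle w,\eta\rangle}\big)^{m_a}$, and after cancellation the sum collapses to $(1-\|\eta\|^2)^{-|\alpha|}\sum_a\frac{z^a\bar\zeta^a}{c_a}(1-\langle w,\eta\rangle)^{-m_a}$. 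The key point is that $z^a$ is an eigenfunction of each Euler-type factor $lI+\sum_j\alpha_j(I+z_j\partial_{z_j})$ with eigenvalue $l+s_a$, since $z_j\partial_{z_j}z^a=a_jz^a$ and the factor $(1-\langle w,\eta\rangle)^{-m_a}$ is constant in $z$; thus the product operator in \eqref{D} multiplies the $a$-th term by $\prod_{l=1}^k(l+s_a)$. Multiplying by the scalar prefactor of $D_{U^\alpha}$ and matching powers of $(1-\langle w,\eta\rangle)$ recovers the displayed series for $K_{U^\alpha}$ term by term, which proves \eqref{3}.

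The main obstacle is purely analytic: justifying the term-by-term manipulations rather than the algebra itself. One must check the locally uniform convergence of all the monomial series on compact subsets of $U^\alpha\times U^\alpha$, the legitimacy of summing the $w$-series into the weighted ball kernel, and the interchange of the differential operator $D_{U^\alpha}$ with the infinite sum. These follow from standard Bergman-space facts—local uniform boundedness of the partial sums together with Cauchy estimates on a slightly larger domain to control the $z_j\partial_{z_j}$ derivatives—but they are what upgrade the formal identity of power series to an identity of holomorphic functions on $U^\alpha\times U^\alpha$.
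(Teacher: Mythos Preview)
The paper does not actually prove this statement: Theorem~2.2 is merely quoted from the author's earlier work \cite{Zhenghui}, with Section~2 recalling only the definitions and the formula.  There is therefore no in-paper argument to compare your proposal against.

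That said, your monomial-expansion argument is correct and is the natural route for complete Reinhardt domains.  Each step checks: the slice computation $\int_{U^\alpha_w}|z^a|^2\,dV=(1-\|w\|^2)^{s_a}c_a$ with $s_a=|\alpha|+\sum_j\alpha_j a_j$; the identification of the $b$-sum with the weighted Bergman kernel of $\mathbb B^k$ and its normalizing constant $\Gamma(k+1+s_a)/\bigl(\pi^k\Gamma(s_a+1)\bigr)=\pi^{-k}\prod_{l=1}^k(l+s_a)$; the substitution of $h(z,w,\eta)$ together with the cancellation of the factor $(1-\|\eta\|^2)^{m_a}$; and the eigenvalue computation $\bigl(lI+\sum_j\alpha_j(I+z_j\partial_{z_j})\bigr)z^a=(l+s_a)z^a$.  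Combining these reproduces your displayed series for $K_{U^\alpha}$ on both sides.  The convergence and termwise-differentiation issues you flag are routine for Bergman series on Reinhardt domains and pose no real obstacle.  One small point worth making explicit: for non-integer $\alpha_j$ the factor $(1-\langle w,\eta\rangle)^{\alpha_j}$ in $h$ requires a branch of the power, and the principal branch is available because $\operatorname{Re}\bigl(1-\langle w,\eta\rangle\bigr)>0$ on $\mathbb B^k\times\mathbb B^k$; relatedly, the inequality $|1-\langle w,\eta\rangle|^2\ge(1-\|w\|^2)(1-\|\eta\|^2)$ guarantees that $h(z,w,\eta)$ indeed lies in $U^\alpha_\eta$, so the evaluation $K_{U^\alpha_\eta}(h;\bar\zeta)$ is literally meaningful rather than only a formal power-series identity.
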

\section{Lemmas and Theorem 3.3}
The proof of Theorem 1.2 uses the following variant of Schur's {lemma}. See \cite{EM} for its proof. 
\begin{lem}[Schur's Lemma]
	Let $\Omega$ be a domain in $\mathbb C^n$ and let $K$ be a non-negative measurable function on $\Omega\times\Omega$. Let $\mathcal K$ be the integral operator with kernel $K$. Suppose there exists a positive auxiliary function $h$ on $\Omega$,  and a number $a>0$ such that for all $\epsilon\in (0,a)$, the following two inequalities hold:
	\begin{enumerate}
		\item $\mathcal K(h^{-\epsilon})(z)=\int_{\Omega}K(z,{\zeta})h({\zeta})^{-\epsilon}dV({\zeta})\lesssim h^{-\epsilon}(z)$,
		\item $\mathcal K(h^{-\epsilon})({\zeta})=\int_{\Omega}K(z,{\zeta})h(z)^{-\epsilon}dV(z)\lesssim h^{-\epsilon}({\zeta})$.
		\end{enumerate} 
		Then $\mathcal K$ is a bounded operator on $L^p(\Omega)$, for all $p\in (1,\infty)$.
		\end{lem}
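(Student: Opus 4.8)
The plan is to prove this as the classical Schur test, using the two hypotheses to control the two integrals that appear after one symmetrizes the kernel via Hölder's inequality. Fix $p\in(1,\infty)$ and let $p'$ denote its conjugate exponent, so that $\frac{1}{p}+\frac{1}{p'}=1$. I would introduce a Schur weight of the form $\phi=h^{-s}$, where $s>0$ is a small parameter to be fixed at the very end. Since the kernel $K$ is non-negative we have $|\mathcal{K}f|\le\mathcal{K}(|f|)$, so it suffices to estimate $\mathcal{K}$ applied to a non-negative function, which I denote again by $f$.

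First I would split the integrand and apply Hölder's inequality in the variable $\zeta$ with exponents $p'$ and $p$, writing
$$K(z,\zeta)f(\zeta)=\left[K(z,\zeta)^{1/p'}\phi(\zeta)\right]\cdot\left[K(z,\zeta)^{1/p}\frac{f(\zeta)}{\phi(\zeta)}\right].$$
This yields the pointwise bound
$$\mathcal{K}f(z)\le\left(\int_{\Omega}K(z,\zeta)\phi(\zeta)^{p'}\,dV(\zeta)\right)^{1/p'}\left(\int_{\Omega}K(z,\zeta)\frac{f(\zeta)^p}{\phi(\zeta)^p}\,dV(\zeta)\right)^{1/p}.$$
The first factor is exactly hypothesis (1) with $\epsilon=sp'$, since $\int_{\Omega}K(z,\zeta)\phi(\zeta)^{p'}\,dV(\zeta)=\int_{\Omega}K(z,\zeta)h(\zeta)^{-sp'}\,dV(\zeta)\lesssim h(z)^{-sp'}=\phi(z)^{p'}$.

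Next I would substitute this estimate, raise the inequality to the $p$-th power so that the first factor becomes $\phi(z)^p$ times a constant, integrate in $z$ over $\Omega$, and apply Tonelli's theorem to interchange the order of integration (legitimate because $K\ge 0$). Pulling out $\phi(\zeta)^{-p}$ leaves the inner integral $\int_{\Omega}K(z,\zeta)\phi(z)^p\,dV(z)$, which is precisely hypothesis (2) with $\epsilon=sp$ and is bounded by $\phi(\zeta)^p$. The factors $\phi(\zeta)^{-p}$ and $\phi(\zeta)^{p}$ then cancel, and one is left with $\|\mathcal{K}f\|_p^p\lesssim\|f\|_p^p$, i.e. $\|\mathcal{K}f\|_p\le C_1^{1/p'}C_2^{1/p}\|f\|_p$, where $C_1,C_2$ are the implied constants in (1) and (2).

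The only point requiring care — and the one I would flag as the crux — is the choice of $s$: the two steps above require $\epsilon=sp'$ and $\epsilon=sp$ to lie in the admissible interval $(0,a)$ on which the hypotheses are assumed. Since $a>0$ and $p$ are both fixed, I would simply take any $s$ with $0<s<a/\max(p,p')$, so that both $sp$ and $sp'$ fall in $(0,a)$. This is a mild obstacle rather than a deep one, but it is the essential mechanism: the weight $h^{-s}$ must be tuned to the given exponent $p$, so the resulting bound is not uniform in $p$. This is entirely consistent with the per-$p$ formulation of the lemma, and with $s$ so chosen the argument closes for every $p\in(1,\infty)$.
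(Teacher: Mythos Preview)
Your argument is correct and is exactly the classical Schur test: insert the weight $\phi=h^{-s}$, apply H\"older in $\zeta$, use hypothesis~(1) with $\epsilon=sp'$ on the first factor, integrate in $z$, swap the order via Tonelli, and use hypothesis~(2) with $\epsilon=sp$ on the remaining inner integral; the choice $0<s<a/\max(p,p')$ makes both exponents admissible. The paper does not actually supply a proof of this lemma---it only cites \cite{EM}---so there is nothing to compare against beyond noting that your proof is the standard one and would be what appears in the cited reference.
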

	
		{We will take the function $K(z,\zeta)$ from Lemma 3.1 to be the absolute Bergman kernel $|K_{\Omega}(z;\bar \zeta)|$.} Inequalities (1) and (2) in the lemma are equivalent since {$K_{\Omega}(z;\bar \zeta)=\overline{K_{\Omega}(\zeta,\bar z)}$}. The $L^p$ boundedness of the corresponding operator $\mathcal K$ then implies the $L^p$ boundedness of the Bergman projection. To show that the Bergman projection on $\Omega$ is $L^p$ bounded for $p\in (1,\infty)$, it suffices to find an auxiliary function $h$ as in Lemma 3.{1} and show that $K_{\Omega}$ is $h$-regular of type $0$. {In many cases,} one can choose $h$ to be the distance function to the boundary.
	
			From now on we let $\Omega$ be a smooth bounded {complete Reinhardt} domain in $\mathbb C^n$. 	
			On such a domain $\Omega$, a defining function with {several useful symmetry properties} can be chosen.
			\begin{lem}
				Let $\Omega\subseteq\mathbb C^n$ be a smooth {complete Reinhardt} domain.   Then there exists a defining function $\rho$ of $\Omega$ satisfying the following properties: 
				\begin{enumerate}[label=(\alph*)]
					\item $\rho$ is smooth in a neighborhood of the boundary $\mathbf b \Omega$.
					\item   If $|z_j|=|\zeta_j|$ for $1\leq j\leq n$, then $\rho(z )=\rho(\zeta )$ 
					\item If $|z_j|\leq|\zeta_j|$ for $1\leq j\leq n$, then $\rho(z )\leq \rho(\zeta )$.
					\item For $1\leq j\leq n$, $z_j\rho_{z_j}(z)\geq0$.
					\item If $z\in \mathbf b\Omega$, then $\sum_{j=1}^{n}z_j\rho_{z_j}(z)>0$.
				\end{enumerate}
			\end{lem}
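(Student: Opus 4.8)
The plan is to exhibit one explicit defining function, built from the Minkowski gauge of $\Omega$, and to verify all five properties by pushing everything down to the real ``shadow'' of $\Omega$ in modulus-squared coordinates. Write $p(z)=(|z_1|^2,\dots,|z_n|^2)$ and let $\omega=p(\Omega)\subseteq\mathbb{R}^n_{\ge 0}$. Because $\Omega$ is complete Reinhardt (Definition 2.2), $\omega$ is a bounded \emph{lower set}: if $t\in\omega$ and $0\le s\le t$ coordinatewise, then $s\in\omega$; since $\Omega$ is nonempty it contains the origin, so $\omega$ is a neighborhood of $0$. As a starting smooth object I would take the signed distance function $\delta$ to $\mathbf{b}\Omega$, which is smooth near the boundary, has $|\nabla\delta|\equiv 1$ there, and is invariant under the torus action (which acts by isometries); by the Whitney-type theorem on smooth invariants, $\delta=\psi\circ p$ for a function $\psi$ smooth up to the coordinate hyperplanes, so that $\mathbf{b}\omega=\{\psi=0\}$ is a smooth hypersurface.

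Next I would set $\rho=M\circ p-1$, where $M(t)=\inf\{\lambda>0:t/\lambda\in\omega\}$ is the Minkowski gauge of $\omega$, and verify (a)--(e) for this $\rho$. Property (b) is immediate since $\rho$ depends only on $p(z)$. Because $\omega$ is a lower set, $M$ is nondecreasing in each $t_j$, giving (c); and since $\rho_{z_j}=M_j(p(z))\,\bar z_j$ we get $z_j\rho_{z_j}=|z_j|^2\,M_j\ge 0$, which is (d). As $M$ is positively homogeneous of degree one, Euler's identity gives $\sum_j z_j\rho_{z_j}=\sum_j t_j M_j(t)=M(t)=1>0$ on $\mathbf{b}\Omega$, which is (e); this in particular forces $\nabla\rho\neq 0$ on $\mathbf{b}\Omega$, so $\rho$ is a genuine defining function. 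Thus (b)--(e) are formal consequences once $\rho$ is known to be smooth near the boundary.

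The real content is property (a), that is, smoothness of $M$ near $\{M=1\}=\mathbf{b}\omega$. Solving $\psi(t/\lambda)=0$ for $\lambda$ and applying the implicit function theorem, I find that $M$ is smooth there precisely when the Euler field $E=\sum_j t_j\,\partial_{t_j}$ is transverse to $\mathbf{b}\omega$, i.e.\ $E\cdot\nabla\psi\neq 0$. To establish this, fix $t^0\in\mathbf{b}\omega$ and let $Z=\{j:t^0_j=0\}$. For each $k\notin Z$ (so $t^0_k>0$) I claim $\psi_k(t^0)\ge 0$: if not, the outward direction $\nabla\psi$ has negative $k$-th component, so $t^0+\epsilon e_k\in\omega$ while $t^0-\epsilon e_k\notin\overline{\omega}$; but $t^0-\epsilon e_k\ge 0$ and $t^0-\epsilon e_k\le t^0+\epsilon e_k\in\omega$ force $t^0-\epsilon e_k\in\omega$ by the lower-set property, a contradiction. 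Moreover, writing $\delta=\psi\circ p$ and noting that $dp$ annihilates exactly the coordinate directions in $Z$, the relation $|\nabla\delta|=1\neq 0$ forces $\psi_k(t^0)\neq 0$ for some $k\notin Z$. Combining these two facts gives $E\cdot\nabla\psi=\sum_k t^0_k\psi_k(t^0)\ge t^0_k\psi_k(t^0)>0$, since the terms with $k\in Z$ drop out. This yields transversality at every boundary point, hence the smoothness of $M$ and of $\rho$ near $\mathbf{b}\Omega$, completing (a).

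The step I expect to be the main obstacle is exactly this transversality at boundary points lying on the coordinate hyperplanes, where the reduced gradient $\nabla\psi$ can lose its components in the vanishing directions and $E\cdot\nabla\psi$ threatens to vanish. The resolution above hinges on two ingredients working together—the nonnegativity of the reduced gradient in the non-vanishing coordinates, coming from the lower-set structure, and the fact that the \emph{unit} gradient of the signed distance cannot be supported entirely on the collapsed directions—so I would be careful to use the signed distance function (with $|\nabla\delta|\equiv 1$) rather than an arbitrary defining function, since an arbitrary one could have $\nabla$ degenerate in precisely the way that breaks the argument.
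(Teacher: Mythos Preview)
Your proof is correct but takes a genuinely different route from the paper's.

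The paper chooses $\rho$ to be the signed distance to $\mathbf b\Omega$ and argues directly: (a)--(c) are immediate; (d) is obtained from (b) and (c) by differentiating in polar coordinates $z_j=t_je^{i\theta_j}$; and (e) is proved by contradiction---if $\sum_j z_j\rho_{z_j}(z)=0$ at a boundary point, each nonnegative summand vanishes, which together with the nonvanishing gradient forces some $\rho_{z_{j_0}}(z)\neq 0$ at a coordinate with $z_{j_0}=0$, contradicting the fact that (c) makes $z$ a local minimum of $\rho$ in the $z_{j_0}$ direction.

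You instead take $\rho=M\circ p-1$ with $M$ the Minkowski gauge of the shadow $\omega=p(\Omega)$. This makes (b)--(d) formal and gives (e) for free via Euler's identity (and with the pleasant sharpening $\sum_j z_j\rho_{z_j}\equiv 1$ on $\mathbf b\Omega$). The cost is that all the work shifts to (a): you must show $M$ is smooth near $\{M=1\}$, which reduces to transversality of the Euler field to $\mathbf b\omega$. Your transversality argument---nonnegativity of $\psi_k$ in the nonvanishing directions from the lower-set property, plus nondegeneracy of $\nabla\delta$ to guarantee one strictly positive term---is essentially the same contradiction the paper runs for (e), transplanted to the reduced function $\psi$. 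So the two proofs contain the same key step, placed differently.

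The paper's version is more elementary: it never needs the Schwarz/Whitney theorem on smooth torus invariants that you invoke to get $\psi$ smooth up to (and across) the coordinate hyperplanes. Your version yields a defining function with extra algebraic structure (exact homogeneity of $M$), which is not needed in the sequel but is aesthetically nice. Either choice suffices for the applications in Section~4, where only (c), (d), and the strict positivity in (e) near the boundary are used.
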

			\begin{proof}
				Set $\rho$ to be the function defined by the distance between $z$ and $\mathbf b\Omega$:
				$$\rho(z)=\begin{cases}
				\text{-dist$(z,\mathbf b\Omega)$}& z\in\Omega\\
				\text{dist$(z,\mathbf b\Omega)$}& z\notin\Omega
				\end{cases}.$$
				{Then {property} (a) is true for any domain $\Omega$ with smooth boundary. {Properties} (b) and (c) also hold since $\Omega$ is complete Reinhardt}. 
			 Consider polar coordinates $z_j=t_je^{i\theta_j}$ for $1\leq j\leq n$. Since $\rho$ is invariant under the rotation in each coordinates, we have:
				\begin{align}\label{02}
				0=\frac{\partial}{\partial \theta_j}\rho\left(t_1e^{i\theta_1},\dots,t_ne^{i\theta_n}\right)=i\left(z_j\rho_{z_j}(t_1e^{i\theta_1},\dots,t_ne^{i\theta_n})-\bar{z}_j\rho_{\bar{z}_j}(t_1e^{i\theta_1},\dots,t_ne^{i\theta_n})\right).\end{align}
				The monotonicity of $\rho$ in $|z_j|$ implies:
				\begin{align}\label{03}
				0\leq t_j\frac{\partial}{\partial t_j}\rho\left(t_1e^{i\theta_1},\dots,t_ne^{i\theta_n}\right)=z_j\rho_{z_j}(t_1e^{i\theta_1},\dots,t_ne^{i\theta_n})+\bar{z}_j\rho_{\bar{z}_j}(t_1e^{i\theta_1},\dots,t_ne^{i\theta_n}). 
				\end{align}
				Combining these two formulas yields Property (d). 
				
				To prove Property (e), it suffices to show that $\sum_{j=1}^{n}z_j\rho_{z_j}(z)\neq0$ on $\mathbf b\Omega$. Suppose not. Then there exists some $z\in\mathbf b\Omega$ such that $z_j\rho_{z_j}(z)=0$ for all $j$. Let $\mathcal A$ denote the set of indices $j$ such that $z_j=0$ and let $\mathcal B$ denote the complement of $\mathcal A$ in $\{1,\dots, n\}$. Then $\rho_{z_j}(z)$ equals 0 for all $j\in \mathcal A$. Since the gradient of $\rho$ does not vanish on $\mathbf b\Omega$, there exists an index $j_0\in \mathcal B$ such that $\rho_{z_{j_0}}(z)\neq 0$. Thus $z_{j_0}$ equals 0. The fact that $z_{j_0}=0$ and Property (c) then imply that $z$ is a local min for $\rho(z)$ in the $z_{j_0}$ direction. This contradicts $\rho_{z_{j_0}}(z)\neq 0$. Therefore the sum $\sum_{j=1}^{n}z_j\rho_{z_j}(z)$ does not vanish on the boundary.
			\end{proof}
				
			The crucial estimates for Theorem 1.2 arise from the following theorem:
				\begin{thm}
					Let $\Omega\subseteq \mathbb C^n$ be a smooth {complete Reinhardt} domain with a defining function $\rho$. For $\alpha\in \mathbb R^n_+$, let $f_{\alpha}$ and $U^{\alpha}$  be as {(\ref{f}) and (\ref{U})}. If $D^{\beta}_zK_{\Omega}$ is $(-\rho)$-regular whenever $ |\beta|\leq k$,
					then $K_{U^{\alpha}}$ is $\left((1-\|w\|^2)(-\rho\circ f_\alpha)\right)$-regular of type $0$.
				\end{thm}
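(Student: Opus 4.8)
The plan is to start from the explicit kernel formula (\ref{3}) and reduce the integral defining $(-\rho)$-regularity over $U^{\alpha}$ to an inner integral over $\Omega$, controlled by the hypothesis, followed by an outer integral over $\mathbb B^k$, controlled by standard ball estimates of Forelli--Rudin type. First I would rewrite the slice kernel in (\ref{3}) through the biholomorphism $f_{\alpha}(\cdot,\eta)\colon U^{\alpha}_{\eta}\to\Omega$, whose holomorphic Jacobian is the constant $(1-\|\eta\|^2)^{-|\alpha|/2}$; the transformation rule for Bergman kernels then gives $K_{U^{\alpha}_{\eta}}(h;\bar\zeta)=(1-\|\eta\|^2)^{-|\alpha|}K_{\Omega}(f_{\alpha}(h,\eta);\overline{f_{\alpha}(\zeta,\eta)})$. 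Writing $\xi=f_{\alpha}(h(z,w,\eta),\eta)$, a direct computation gives $\xi_j=z_j(1-\|\eta\|^2)^{\alpha_j/2}(1-\langle w,\eta\rangle)^{-\alpha_j}$, so $\xi$ depends linearly on each $z_j$ and the Euler field $z_j\frac{\partial}{\partial z_j}$ transforms into $\xi_j\frac{\partial}{\partial \xi_j}$. Hence $D_{U^{\alpha}}$, applied to $K_{\Omega}(\xi;\cdot)$, becomes a degree-$k$ polynomial in the single operator $E=\sum_j\alpha_j\xi_j\frac{\partial}{\partial \xi_j}$; since each power $E^m$ expands as $\sum_{|\gamma|\le m}c_{\gamma}\xi^{\gamma}\frac{\partial^{\gamma}}{\partial\xi^{\gamma}}$, and since the prefactor $(1-\|\eta\|^2)^{|\alpha|}$ in (\ref{D}) cancels the Jacobian factor $(1-\|\eta\|^2)^{-|\alpha|}$, I obtain the pointwise bound
\[
\left|K_{U^{\alpha}}(z,w;\bar\zeta,\bar\eta)\right|\lesssim\frac{1}{|1-\langle w,\eta\rangle|^{1+k+|\alpha|}}\sum_{|\gamma|\le k}|\xi^{\gamma}|\,\left|D^{\gamma}_zK_{\Omega}(\xi;\overline{f_{\alpha}(\zeta,\eta)})\right|,
\]
with bounded coefficients depending only on $\alpha$ and $k$.

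Next I would integrate $|K_{U^{\alpha}}|$ against $\big((1-\|\eta\|^2)(-\rho\circ f_{\alpha})\big)^{-\epsilon}$ and apply Fubini, performing the $\zeta$-integral first. On the slice I change variables by $\xi'=f_{\alpha}(\zeta,\eta)$, whose real Jacobian contributes a factor $(1-\|\eta\|^2)^{|\alpha|}$ and which turns $(-\rho(f_{\alpha}(\zeta,\eta)))^{-\epsilon}$ into $(-\rho(\xi'))^{-\epsilon}$, while $(1-\|\eta\|^2)^{-\epsilon}$ is constant in $\zeta$ and factors out. The hypothesis that $D^{\gamma}_zK_{\Omega}$ is $(-\rho)$-regular of type $|\gamma|$ then bounds $\int_{\Omega}|D^{\gamma}_zK_{\Omega}(\xi;\bar\xi')|(-\rho(\xi'))^{-\epsilon}\,dV(\xi')\lesssim(-\rho(\xi))^{-\epsilon-|\gamma|}$. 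What remains is the outer integral
\[
\sum_{|\gamma|\le k}\int_{\mathbb B^k}\frac{(1-\|\eta\|^2)^{|\alpha|-\epsilon}}{|1-\langle w,\eta\rangle|^{1+k+|\alpha|}}\,|\xi^{\gamma}|\,(-\rho(\xi))^{-\epsilon-|\gamma|}\,dV(\eta),
\]
which I must show is $\lesssim(1-\|w\|^2)^{-\epsilon}(-\rho(f_{\alpha}(z,w)))^{-\epsilon}=H(z,w)^{-\epsilon}$.

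For the third and decisive step, set $R=-\rho(f_{\alpha}(z,w))$ and $t=\big((1-\|w\|^2)(1-\|\eta\|^2)\big)^{1/2}/|1-\langle w,\eta\rangle|$, so that Cauchy--Schwarz gives $t\le 1$ and $|\xi_j|=|f_{\alpha}(z,w)_j|\,t^{\alpha_j}$; thus $\xi$ is an anisotropic contraction of $f_{\alpha}(z,w)$ toward the origin, and $|\xi^{\gamma}|\simeq t^{\sum_j\alpha_j\gamma_j}$. Using the rotational invariance and monotonicity of $\rho$ from Lemma 3.2 (properties (b) and (c)) I would establish a comparison expressing $-\rho(\xi)$ through $t$ and $R$ — in the model case $\Omega=\mathbb B^k$ this comparison is the exact identity $-\rho(\xi)=(1-t^2)+t^2R$ — which is crucially much larger than $R$ whenever $t$ is bounded away from $1$. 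Substituting the value of $t^2$ recasts each summand as a ball integral with the shifted denominator $\big(|1-\langle w,\eta\rangle|^2-(1-R)(1-\|w\|^2)(1-\|\eta\|^2)\big)^{-\epsilon-|\gamma|}$; evaluating these for $\epsilon$ in a sufficiently small interval $(0,a)$ then produces exactly the factor $(1-\|w\|^2)^{-\epsilon}R^{-\epsilon}$, as required.

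The main obstacle is precisely this last step. One cannot afford the naive bound $-\rho(\xi)\ge R$ from property (c): it is sharp only at the single point $\eta=w$ (where $t=1$), and discarding the $\eta$-dependence of $-\rho(\xi)$ throws away the entire gain of $(1-\|w\|^2)^{-\epsilon}$ while leaving the untenable factor $R^{-\epsilon-|\gamma|}$. The delicate tasks are therefore: (i) obtaining a comparison for $-\rho(\xi)$ for a \emph{general} smooth complete Reinhardt $\Omega$, not merely the ball, from the symmetry properties of $\rho$, sharp enough that the singularity concentrated at $\eta=w$ is captured with the correct strength; and (ii) carrying out the resulting ball integrals with the $R$-dependent denominator, including the derivative terms with $\gamma\neq0$, whose extra singularity $(-\rho(\xi))^{-|\gamma|}$ must be shown to be absorbed by the vanishing factor $|\xi^{\gamma}|\simeq t^{\sum_j\alpha_j\gamma_j}$ together with the improved decay of the integral near $\eta=w$. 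The restriction $\epsilon\in(0,a)$ for small $a$ is exactly what keeps all of these integrals convergent.
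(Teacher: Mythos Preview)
Your reduction through the outer $\mathbb B^k$-integral matches the paper: expand $D_{U^{\alpha}}$ as $\sum c_\beta z^\beta D^\beta_z$, rewrite the slice kernel via $f_\alpha(\cdot,\eta)$, change variables in $\zeta$, and invoke the $(-\rho)$-regularity of $D^\beta_zK_\Omega$ on the inner integral. The divergence is in how the remaining $\eta$-integral is handled. The paper does \emph{not} seek a static pointwise comparison for $-\rho(\xi)$. Instead it applies the M\"obius automorphism $\varphi_w$ of $\mathbb B^k$ to send the critical point $\eta=w$ to the origin; after this change $|\xi_j|=|z_j|(1-r^2)^{\alpha_j/2}(1-\|w\|^2)^{-\alpha_j/2}$ depends only on $r=\|\tau\|$. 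In spherical coordinates the paper then uses property~(e) of Lemma~3.2---the transversality $\sum_j z_j\rho_{z_j}>c$ near $\mathbf b\Omega$---to dominate $(-\rho(\xi))^{-\epsilon-|\beta|}$ by a constant times $(1-r^2)r^{-1}\bigl(-\tfrac{\partial}{\partial r}(-\rho(\xi))^{-\epsilon-|\beta|+1}\bigr)$. One integration by parts in $r$ trades a power of $(-\rho)^{-1}$ for a factor $(1-r^2)$ that is absorbed into the kernel via $|1-\langle\tau,w\rangle|$; iterating $|\beta|$ times reduces to $\beta=\mathbf 0$, which is handled directly by monotonicity~(c) and Lemma~3.4.

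Your alternative route has a genuine gap precisely where you flag the ``main obstacle.'' Properties (b) and (c) alone cannot yield the comparison you want: monotonicity gives only $-\rho(\xi)\ge R$, with no quantitative rate as $\xi$ contracts toward the origin, and without such a rate the terms with $|\gamma|\ge1$ stay uncontrolled. The missing ingredient is property~(e), which encodes that $\rho$ increases at a definite speed along the anisotropic dilations $s\mapsto(s^{\alpha_1}z_1,\dots,s^{\alpha_n}z_n)$ near $\mathbf b\Omega$; integrating that differential inequality is exactly what would produce a lower bound of the shape $-\rho(\xi)\gtrsim(1-t^{c})+R$, and it is what the paper exploits in its integration-by-parts form. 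Note also that your model identity $-\rho(\xi)=(1-t^2)+t^2R$ on the ball is exact only when all $\alpha_j$ are equal; for general $\alpha$ one has $-\rho(\xi)=1-\sum_j t^{2\alpha_j}|f_\alpha(z,w)_j|^2$, which is not a function of $(t,R)$ alone, so the general-$\Omega$ comparison cannot literally take that form. Even granting a correct lower bound, the resulting $\eta$-integrals with the $R$-shifted denominator are not covered by Lemma~3.4 as stated and would need a separate argument; the paper's integration-by-parts scheme sidesteps this entirely.
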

				{We give a proof for Theorem 3.3 in Section 4.  Theorem 3.3  implies Theorem 1.2. Indeed, the kernel $K_{U^{\alpha}}$ being $\left((1-\|w\|^2)(-\rho\circ f_\alpha)\right)$-regular of type $0$ implies that the Bergman projection on $U^{\alpha}$ is bounded in $L^p$ for $p\in (1,\infty)$. }
				
			We end this section by referencing several estimates needed in the proof of Theorem 3.3. See for example \cite{Zhu}.
			\begin{lem}Let $\sigma$ denote Lebesgue measure on the unit sphere $\mathbb S^k\subset\mathbb C^k$.
				For $\epsilon<1$ and $w\in\mathbb B^k$, let 
				\begin{equation}\label{**}
				a_{\epsilon,\delta}(w)=\int_{\mathbb B^k}\frac{(1-\|\eta\|^2)^{-\epsilon}}{|1-\langle w,\eta\rangle |^{1+k-\epsilon-\delta}}dV(\eta),
				\end{equation}
				and let
				\begin{equation}\label{ }
				b_\delta(w)=\int_{\mathbb S^k}\frac{1}{|1-\langle w,\eta\rangle |^{k-\delta}}d\sigma(\eta).
				\end{equation}
				Then \begin{enumerate}
					\item for $\delta>0$, both $a_{\epsilon,\delta}$ and $b_{\delta}$ are bounded on $\mathbb B^k$.
					\item for $\delta=0$, both $a_{\epsilon,\delta}(w)$ and $b_{\delta}(w)$ are comparable to the function $-\log(1-\|w\|^2)$.
					\item for $\delta<0$, both $a_{\epsilon,\delta}(w)$ and $b_{\delta}(w)$ are comparable to the function $(1-\|w\|^2)^{\delta}$.
				\end{enumerate}
			\end{lem}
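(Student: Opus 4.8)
The plan is to reduce both integrals to a single power series in $\|w\|^2$ and then read off the three cases from the decay rate of its coefficients. By the unitary invariance of $\sigma$ and of $dV$, both $b_\delta$ and $a_{\epsilon,\delta}$ depend only on $r=\|w\|$, so I may assume $w=(r,0,\dots,0)$, whence $\langle w,\eta\rangle=r\bar\eta_1$ with $|r\bar\eta_1|\le r<1$ on $\mathbb S^k$ and on $\overline{\mathbb B^k}$. Fixing $w$ in the open ball is what makes every series below converge absolutely and uniformly, so all interchanges of sum and integral are legitimate. Write $s$ for the exponent of $|1-\langle w,\eta\rangle|$ (so $s=k-\delta$ for $b_\delta$, and $s=1+k-\epsilon-\delta$ for $a_{\epsilon,\delta}$) and set $c=s/2$. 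If $s\le 0$ the kernel is bounded and the integral is trivially finite; since $\epsilon<1$ this can only happen when $\delta>0$, consistent with case (1), so I assume $s>0$, i.e. $c>0$, where the binomial expansion applies.

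For $b_\delta$ I would expand
\[
\frac{1}{|1-r\bar\eta_1|^{2c}}=\Bigl|\sum_{m=0}^{\infty}\frac{\Gamma(m+c)}{\Gamma(c)\,m!}\,(r\bar\eta_1)^m\Bigr|^2,
\]
integrate over $\mathbb S^k$, and use that $\int_{\mathbb S^k}\bar\eta_1^{\,m}\eta_1^{\,m'}\,d\sigma=0$ for $m\ne m'$ (rotation in $\eta_1$) together with the standard moment $\int_{\mathbb S^k}|\eta_1|^{2m}\,d\sigma\simeq\frac{m!\,(k-1)!}{(m+k-1)!}\simeq m^{-(k-1)}$. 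The cross terms drop, leaving the positive power series $b_\delta(w)=\sum_{m\ge0}\bigl(\frac{\Gamma(m+c)}{\Gamma(c)m!}\bigr)^2\bigl(\int_{\mathbb S^k}|\eta_1|^{2m}d\sigma\bigr)r^{2m}$. Since $\frac{\Gamma(m+c)}{\Gamma(c)m!}\simeq m^{c-1}$ by Stirling, the $m$-th coefficient is comparable to $m^{2c-2-(k-1)}=m^{-\delta-1}$, using $2c=k-\delta$. Hence $b_\delta(w)\simeq\sum_{m\ge1}m^{-\delta-1}r^{2m}$, the $m=0$ term contributing only a harmless constant.

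For $a_{\epsilon,\delta}$ I would run the identical argument against the measure $(1-\|\eta\|^2)^{-\epsilon}\,dV$ on $\mathbb B^k$, now with $2c=1+k-\epsilon-\delta$. The orthogonality $\int_{\mathbb B^k}\bar\eta_1^{\,m}\eta_1^{\,m'}(1-\|\eta\|^2)^{-\epsilon}dV=0$ for $m\ne m'$ again kills the cross terms, and the diagonal moments are the beta integrals $d_m=\int_{\mathbb B^k}|\eta_1|^{2m}(1-\|\eta\|^2)^{-\epsilon}dV$. Here the hypothesis $\epsilon<1$ is exactly what guarantees $-\epsilon>-1$, so each $d_m$ converges, with $d_m\simeq\frac{\Gamma(m+1)}{\Gamma(m+k-\epsilon+1)}\simeq m^{-(k-\epsilon)}$. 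Combining with $\frac{\Gamma(m+c)}{\Gamma(c)m!}\simeq m^{c-1}$ and $2c-2=k-\epsilon-\delta-1$, the $m$-th coefficient is again comparable to $m^{(k-\epsilon-\delta-1)-(k-\epsilon)}=m^{-\delta-1}$, so $a_{\epsilon,\delta}(w)\simeq\sum_{m\ge1}m^{-\delta-1}r^{2m}$ as well. (Alternatively, slicing the ball into spheres of radius $t$ reduces $a_{\epsilon,\delta}$ to a radial integral of $b$-type kernels; either route lands on the same series.)

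The proof then collapses to the single elementary asymptotic, as $x=r^2\to1^-$, of
\[
S_\gamma(x)=\sum_{m\ge1}m^{-\gamma}x^m,\qquad \gamma=\delta+1.
\]
For $\gamma>1$ (i.e. $\delta>0$) the series $\sum m^{-\gamma}$ converges, so $S_\gamma$ is bounded, giving case (1). For $\gamma=1$ (i.e. $\delta=0$) one has $S_1(x)=-\log(1-x)$ exactly, giving case (2) upon setting $x=\|w\|^2$. For $\gamma<1$ (i.e. $\delta<0$), comparing the sum with $\int_1^\infty t^{-\gamma}x^t\,dt$ yields $S_\gamma(x)\simeq\Gamma(1-\gamma)(1-x)^{\gamma-1}=\Gamma(-\delta)(1-x)^{\delta}$, giving case (3). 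I expect this last estimate — the two-sided comparability $S_\gamma(x)\simeq(1-x)^{\gamma-1}$ as $x\to1^-$ for $\gamma<1$ — to be the only genuinely analytic step, since transferring it back through $\simeq\sum m^{-\delta-1}r^{2m}$ requires only that the positive coefficients are two-sided comparable to $m^{-\delta-1}$ for $m\ge1$, the finitely many low-order discrepancies and the constant term being absorbed as $r\to1$. Everything else is bookkeeping with Stirling's formula and the orthogonality of monomials, and a reference such as \cite{Zhu} supplies these estimates directly; the series computation merely makes the common origin of all three cases transparent.
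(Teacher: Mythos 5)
Your proof is correct; the paper itself gives no argument for this lemma --- it simply cites \cite{Zhu} --- and your binomial expansion of $|1-\langle w,\eta\rangle|^{-2c}$, orthogonality of the monomials $\eta_1^m$, and the moment asymptotics reducing everything to the series $\sum_{m\ge 1} m^{-\delta-1}r^{2m}$ is exactly the classical proof in that reference (Rudin--Zhu), so you have in effect reproduced the intended argument. The only caveat, which you already flag, is that in cases (2) and (3) the two-sided comparability must be read near $\|w\|=1$ (or with the $m=0$ constant retained, e.g.\ against $1-\log(1-\|w\|^2)$), since at $w=0$ the stated right-hand side of case (2) vanishes while $b_0(0)=\sigma(\mathbb S^k)>0$; this is a looseness in the lemma's statement, not in your proof, and it is harmless for the paper's application, which only uses these bounds as $\|w\|\to 1$.
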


		\section{Proof of Theorem 3.3}
\begin{proof}[Proof of Theorem 3.3]
	Recall that for each multi-index $\beta$,   $D^{\beta}_z$ is the differential operator $(\frac{\partial}{\partial z_1})^{\beta_1}\cdots (\frac{\partial}{\partial z_n})^{\beta_n}$.   Then $D_{U^{\alpha}}$ in the previous section can be regarded as a sum of $D^{\beta}_z$:
{\begin{equation}
	D_{U^{\alpha}}=\frac{(1-\|\eta\|^2)^{|\alpha|}}{\pi^k(1- \langle w,\eta\rangle)^{1+k+|\alpha|}}\left(\sum_{|\beta|\leq k} c_{\beta}z^{\beta}D^\beta_z\right),
	\end{equation}
	where $c_\beta$ are fixed constants.}
	  
	{The main goal in this proof is to show the following inequality: 
		\begin{equation}\label{59}
		\int_{U^{\alpha}}\left|K_{U^{\alpha}}(z ,w;\bar\zeta ,\bar \eta)\right|\left(-\rho\left(f_\alpha(\zeta,\eta) \right)(1-\|\eta\|^2)\right)^{-\epsilon}dV\lesssim \left(-\rho\left(f_\alpha(z,w) \right)(1-\|w\|^2)\right)^{-\epsilon}.
		\end{equation}}
	To estimate the integral
			\begin{equation}\label{6}
			\int_{U^{\alpha}}\left|K_{U^{\alpha}}(z ,w;\bar\zeta ,\bar \eta)\right|\left(-\rho\left(f_\alpha(\zeta,\eta) \right)(1-\|\eta\|^2)\right)^{-\epsilon}dV,
			\end{equation}
			we use the formula in Theorem 2.2.
			Substituting (\ref{3}) into the integral in (\ref{6}) yields
			\begin{align}\label{7}
			&\int_{U^{\alpha}}\left|K_{U^{\alpha}}(z ,w;\bar\zeta ,\bar \eta)\right|\left(-\rho\left(f_\alpha(\zeta,\eta) \right)(1-\|\eta\|^2)\right)^{-\epsilon}dV\nonumber\\=&\int_{U^{\alpha}}\left|D_{U^{\alpha}}K_{U^{\alpha}_{\eta}}\left(h(z,w,\eta) ;\bar\zeta \right)\right|\left(-\rho\left(f_\alpha(\zeta,\eta) \right)(1-\|\eta\|^2)\right)^{-\epsilon}dV.
			\end{align}
		
			We set
			\begin{align}
			I_\beta&=\frac{c_{\beta}(1-\|\eta\|^2)^{|\alpha|}}{(1- \langle w,\eta\rangle)^{1+k+|\alpha|}}z^{\beta}D^\beta_z,\nonumber
			\end{align}
			and
			\begin{align}\label{8}
			J_{\beta}&=\int_{U^{\alpha}}\left|I_\beta K_{U^{\alpha}_{\eta}}\left(h(z,w,\eta) ;\bar\zeta \right)\right|\left(-\rho\left(f_\alpha(\zeta,\eta) \right)(1-\|\eta\|^2)\right)^{-\epsilon}dV.
			\end{align}
			By the triangle inequality, we have
			{\begin{align}\label{9}
			\int_{U^{\alpha}}\left|D_{U^{\alpha}}K_{U^{\alpha}_{\eta}}\left(h(z,w,\eta) ;\bar\zeta \right)\right|\left(-\rho\left(f_\alpha(\zeta,\eta) \right)(1-\|\eta\|^2)\right)^{-\epsilon}dV\leq \sum_{|\beta|\leq k} J_{\beta}.
			\end{align}}
			Therefore it suffices to prove that $J_\beta\lesssim \left(-\rho\left(f_\alpha(z,w) \right)(1-\|w\|^2)\right)^{-\epsilon}$ for each $\beta$.
			
			The integral $J_\beta$ equals
			\begin{align}\label{10}
			c_\beta\int_{U^{\alpha}}\left|\frac{(1-\|\eta\|^2)^{|\alpha|}}{(1- \langle w,\eta\rangle)^{1+k+|\alpha|}}z^{\beta}D^\beta_z K_{U^{\alpha}_{\eta}}\left(h(z,w,\eta) ;\bar\zeta \right)\right|\left(-\rho\left(f_\alpha(\zeta,\eta) \right)(1-\|\eta\|^2)\right)^{-\epsilon}dV.
			\end{align}
		{	In order to use $(-\rho)$-regularity assumptions of $D^\beta K_\Omega$ for estimating (\ref{10}), we need to write $D^\beta_z K_{U^{\alpha}_{\eta}}$ in (\ref{10})  in terms of $D^\beta_z K_{\Omega}$ and transform (\ref{10}) into an integral on $\mathbb B^k\times \Omega$.
			
		Recall the mapping $f_\alpha(\cdot,\eta)$ from $\ref{f}$ defined by 
			\begin{equation}
				f_\alpha(\cdot,\eta ):z\mapsto\left(\frac{z_1}{(1-\|\eta\|^2)^{\alpha_1/2}},\cdots,\frac{z_n}{(1-\|\eta\|^2)^{\alpha_n/2}}\right).
			\end{equation}
		It	is a biholomorphism from $U^\alpha_\eta$ onto $\Omega$. Hence we can write the kernel function $K_{U^{\alpha}_\eta}$ in terms of $K_\Omega$ using the biholomorphic transformation formula:
		\begin{equation}\label{111}
		K_{U^{\alpha}_\eta}(z;\bar\zeta)=(1-\|\eta\|^2)^{-|\alpha|}K_\Omega(f_\alpha(z,\eta),\overline{f_\alpha(\zeta,\eta)}).
		\end{equation}}
			Applying (\ref{111}) to (\ref{10}) yields:
			\begin{align}\label{11}
			J_\beta&=c_\beta\int_{U^{\alpha}}\left|\frac{z^{\beta}{D^{\beta}_z}K_{\Omega}\left(h^{\prime}(z,w,\eta) ;\overline{f_{\alpha}(\zeta,\eta)} \right)}{|1- \langle w,\eta\rangle|^{1+k+|\alpha|}}\right|\left(-\rho\left(f_\alpha(\zeta,\eta) \right)(1-\|\eta\|^2)\right)^{-\epsilon}dV,
			\end{align}
			where $h^{\prime}(z,w,\eta)=\left(\frac{z_1(1-\|\eta\|^2)^{\alpha_1/2}}{(1-\langle w,{\eta}\rangle)^{\alpha_1}},\dots, \frac{z_n(1-\|\eta\|^2)^{\alpha_n/2}}{(1-\langle w, {\eta}\rangle)^{\alpha_n}}\right)$. 
			
		 By Substituting $t_j=\frac{\zeta_j}{(1-\|\eta\|^2)^{\alpha_j/2}}$ for $1\leq j\leq n$ to (\ref{11}), we transform $J_\beta$ into an integral on $\mathbb B^k \times \Omega$:
			\begin{equation}\label{12}
			J_\beta=c_\beta\int_{\mathbb B^k}\int_{\Omega}\left|\frac{z^{\beta}{D^\beta_z} K_{\Omega}\left(h^{\prime}(z,w,\eta) ;\bar t \right)}{(1-\|\eta\|^2)^{\epsilon-|\alpha|}|1- \langle w,\eta\rangle |^{1+k+|\alpha|}}\right|\left(-\rho\left(t \right)\right)^{-\epsilon}dV(t )dV(\eta).
			\end{equation}
		For $1\leq j\leq n$, let $D_j$ denote the partial derivative $\frac{\partial}{\partial z_j}$.	Since 
			\begin{align}
			D_jK_{\Omega}\left(h^{\prime}(z,w,\eta) ;\bar t \right)&=\frac{\partial h^{\prime}_j}{\partial z_j}\frac{\partial}{\partial h^{\prime}_j}K_{\Omega}\left(h^{\prime}(z,w,\eta) ;\bar t \right)\nonumber\\&=\frac{(1-\|\eta\|^2)^{\alpha_j/2}}{(1-\langle w,\eta\rangle)^{\alpha_j}}\frac{\partial}{\partial h^{\prime}_j}K_{\Omega}\left(h^{\prime}(z,w,\eta) ;\bar t \right),
			\end{align}
			applying the $(-\rho)$-regularity of ${D^{\beta}_z}K_\Omega$ to the inner integral in (\ref{12}) yields
			\begin{equation}\label{13}
			J_\beta\lesssim \int_{\mathbb B^k}\left|\frac{z^{\beta}\left(-\rho\left(h^{\prime}(z,w,\eta) \right)\right)^{-\epsilon-|\beta|}}{(1-\|\eta\|^2)^{\epsilon-|\alpha|-\alpha\cdot\beta/2}|1- \langle w,\eta\rangle |^{1+k+\alpha\cdot(\mathbf 1+\beta)}}\right|dV(\eta).
			\end{equation}
			Here we use the notation $\alpha\cdot\beta$ to denote $\sum_{j=1}^{n}\alpha_j\beta_j$ and use the notation $\mathbf 1$ to denote the multi-index $(1,\dots,1)\in \mathbb N^n$.
			When $\beta=\mathbf 0$, we have 
			\begin{equation}\label{131}
			J_\mathbf 0\lesssim \int_{\mathbb B^k}\left|\frac{\left(-\rho\left(h^{\prime}(z,w,\eta) \right)\right)^{-\epsilon}}{(1-\|\eta\|^2)^{\epsilon-|\alpha|}|1- \langle w,\eta\rangle |^{1+k+|\alpha|}}\right|dV(\eta).
			\end{equation}
			{Since $w,\eta\in \mathbb B^k$,} the triangle inequality and Cauchy-Schwarz inequality {imply}
			$$\left|\frac{z_j(1-\|\eta\|^2)^{\alpha_j/2}}{(1-\langle w,{\eta}\rangle)^{\alpha_j}}\right|\leq\left|\frac{z_j(1-\|\eta\|^2)^{\alpha_j/2}}{(1-\|w\|^2)^{\alpha_j/2}(1-\|\eta\|^2)^{\alpha_j/2}}\right|=\left|\frac{z_j}{(1-\|w\|^2)^{\alpha_j/2}}\right|.$$
			Therefore, Property {(c)} in Lemma {3.2} implies:
			\begin{align}\label{14}
			J_\mathbf 0&\lesssim \int_{\mathbb B^k}\left|\frac{\left(-\rho\left(h^{\prime}(z,w,\eta) \right)\right)^{-\epsilon}}{(1-\|\eta\|^2)^{\epsilon-|\alpha|}|1- \langle w,\eta\rangle|^{1+k+|\alpha|}}\right|dV(\eta)\nonumber\\&\leq\left(-\rho(f_{\alpha}(z,w))\right)^{-\epsilon}\int_{\mathbb B^k}\frac{(1-\|\eta\|^2)^{-\epsilon+|\alpha|}}{|1- \langle w,\eta\rangle|^{1+k+|\alpha|}}dV(\eta).
			\end{align}
			For $w,\eta\in\mathbb B^k$, we have 
			\begin{equation}\label{*}
			\frac{1-\|\eta\|^2}{|1-\langle w,{\eta}\rangle |}\leq\frac{1-\|\eta\|^2}{1-|\langle w,{\eta}\rangle |}< \frac{1-\|\eta\|^2}{1-\|{\eta}\|}<2.
			\end{equation} 
			Applying this inequality and Lemma 3.4 to (\ref{14}) yields the inequality we need for $J_0$:
			\begin{align}\label{15}
			J_\mathbf 0&\lesssim \left(-\rho(f_{\alpha}(z,w))\right)^{-\epsilon}\int_{\mathbb B^k}\frac{(1-\|\eta\|^2)^{-\epsilon}}{|1- \langle w,\eta\rangle|^{1+k}}dV(\eta)\nonumber
			\\&\lesssim \left(-\rho(f_{\alpha}(z,w))\right)^{-\epsilon}(1-\|w\|^2)^{-\epsilon}.
			\end{align}
			{For the case $\beta\neq \mathbf 0$, we recall the integral we need to estimate:}
			\begin{equation}\label{16}
			\int_{\mathbb B^k}\left|\frac{z^{\beta}\left(-\rho\left(h^{\prime}(z,w,\eta) \right)\right)^{-\epsilon-|\beta|}}{(1-\|\eta\|^2)^{\epsilon-|\alpha|-\alpha\cdot\beta/2}|1- \langle w,\eta\rangle |^{1+k+\alpha\cdot(\mathbf 1+\beta)}}\right|dV(\eta).\end{equation}
			{After rewriting} the integral in spherical coordinates $\eta=rt$ with $r\in[0,1]$ and $t\in \mathbb S^k$, we would like to {write $\left(-\rho\left(h^{\prime}(z,w,\eta) \right)\right)^{-\epsilon-|\beta|}$ in terms of the $|\beta|$-th order derivative of $\left(-\rho\left(h^{\prime}(z,w,rt) \right)\right)^{-\epsilon}$ in $r$.}
		These derivatives vanish at the point $\eta=w$ and hence  are relatively small when compared with $(-\rho)^{-\epsilon-|\beta|}$ . To deal with this problem, we need to move the vanishing point $\eta=w$ to the origin.
		
			When $w=\mathbf 0$, we keep (\ref{16}) the same. When $w\in \mathbb B^k-\{\mathbf 0\}$, we set $$\varphi_w(z)=\frac{w-P_w(z)-s_wQ_w(z)}{1-\langle z,w\rangle},$$
			where $s_w=\sqrt{1-\|w\|^2}$, $P_w(z)=\frac{\langle z,w\rangle}{\|w\|^2}w$ and $Q_w(z)=z-\frac{\langle z,w\rangle}{\|w\|^2}w$. {Then $\varphi_w$ is the automorphism of $\mathbb B^k$ that sends $\mathbf 0$ to $w$ and satisfies $\varphi_w\circ\varphi_w=id$. We use this $\varphi_w$ to send the point $\eta=w$ to the origin.}
			Setting ${\tau}=\varphi_w(\eta)$, then we have
			\begin{align}\label{22}
			\eta&=\varphi_w({\tau}),
			\\\label{221}1-\langle \eta,w \rangle&=\frac{1-\|w\|^2}{1-\langle {\tau},w\rangle},
			\\\label{222}1-\|\eta\|^2&=\frac{(1-\|w\|^2)(1-\|{\tau}\|^2)}{|1-\langle {\tau},w\rangle|^2},
			\\\label{223}dV(\eta)&=\left(\frac{1-\|w\|^2}{|1- \langle {\tau},w\rangle|^2}\right)^{k+1}dV({\tau}).
			\end{align}
			Substituting (\ref{22}), (\ref{221}), (\ref{222}) and (\ref{223}) into the integral (\ref{16}) yields
			\begin{align}
			&\int_{\mathbb B^k}\left|\frac{z^{\beta}\left(-\rho\left(h^{\prime}(z,w,\eta) \right)\right)^{-\epsilon-|\beta|}}{(1-\|\eta\|^2)^{\epsilon-|\alpha|-\alpha\cdot\beta/2}|1- \langle w,\eta\rangle |^{1+k+\alpha\cdot(\mathbf 1+\beta)}}\right|dV(\eta)\nonumber
			\\=&\int_{\mathbb B^k}\frac{|z^{\beta}|(\frac{(1-\|w\|^2)(1-\|{\tau}\|^2)}{|1-\langle {\tau},w\rangle|^2})^{\alpha\cdot\beta/2-\epsilon+|\alpha|}}{\left|\frac{1-\|w\|^2}{1- \langle {\tau},w\rangle}\right|^{1+k+\alpha\cdot(\mathbf 1+\beta)}\frac{(1-\|w\|^2)^{-k-1}}{|1-\langle {\tau},w \rangle|^{-2(k+1)}}}\left(-\rho\left(h^{\prime}(z,w,\varphi_w({\tau})\right)\right)^{-\epsilon-|\beta|}dV({\tau}).
			\end{align}
			Canceling terms in the integral gives
			\begin{align}\label{23}
			\int_{\mathbb B^k}\frac{|z^{\beta}|(1-\|{\tau}\|^2)^{\alpha\cdot\beta/2-\epsilon+|\alpha|}}{|1-\langle {\tau},w\rangle|^{1+k-2\epsilon+|\alpha|}(1-\|w\|^2)^{\alpha\cdot\beta/2+\epsilon}}\left(-\rho\left(h^{\prime}(z,w,\varphi_w({\tau})) \right)\right)^{-\epsilon-|\beta|}dV({\tau}),
			\end{align}
			which is consistent with (\ref{16}) when $w=\mathbf 0$.
			Applying inequality (\ref{*}) to (\ref{23}) and using the fact that $\frac{|z^{\beta}|}{(1-\|w\|^2)^{\alpha\cdot\beta/2}}$ is bounded on $\Omega$, we obtain the following inequality:
			\begin{align}\label{230}
			&\int_{\mathbb B^k}\frac{|z^{\beta}|(1-\|{\tau}\|^2)^{\alpha\cdot\beta/2-\epsilon+|\alpha|}}{|1-\langle {\tau},w\rangle|^{1+k-2\epsilon+|\alpha|}(1-\|w\|^2)^{\alpha\cdot\beta/2+\epsilon}}\left(-\rho\left(h^{\prime}(z,w,\varphi_w({\tau})) \right)\right)^{-\epsilon-|\beta|}dV({\tau})\nonumber
			\\\lesssim& \int_{\mathbb B^k}\frac{(1-\|{\tau}\|^2)^{\alpha\cdot\beta/2-\epsilon+|\alpha|}}{|1-\langle {\tau},w\rangle|^{1+k-2\epsilon+|\alpha|}(1-\|w\|^2)^{\epsilon}}\left(-\rho\left(h^{\prime}(z,w,\varphi_w({\tau})) \right)\right)^{-\epsilon-|\beta|}dV({\tau})\nonumber
			\\\lesssim& \int_{\mathbb B^k}\frac{\left(-\rho\left(h^{\prime}(z,w,\varphi_w({\tau})) \right)\right)^{-\epsilon-|\beta|}}{|1-\langle {\tau},w\rangle|^{1+k-\epsilon}(1-\|w\|^2)^{\epsilon}}dV({\tau}).
			\end{align}
		We set $l(z,w,{\tau})=(l_1(z,w,{\tau}),\dots,l_n(z,w,{\tau}))$ where 
			$$l_j(z,w,{\tau})=\left|h^{\prime}_j\left(z,w,\varphi_w({\tau})\right)\right|=\left|\frac{z_j\left(\frac{(1-\|w\|^2)(1-\|{\tau}\|^2)}{|1-\langle {\tau},w\rangle|^2}\right)^{\alpha_j/2}}{\left(\frac{1-\|w\|^2}{1-\langle {\tau},w\rangle}\right)^{\alpha_j}}\right|=\frac{|z_j|(1-\|{\tau}\|^2)^{\alpha_j/2}}{(1-\|w\|^2)^{\alpha_j/2}}.$$
			Then Lemma 3.2 implies that
			$\rho(h^{\prime}(z,w,\varphi_w({\tau})) )=\rho(l(z,w,{\tau}) ),$
			and the integral in the last line of (\ref{230}) becomes
			\begin{equation}\label{232}
			\int_{\mathbb B^k}\frac{\left(-\rho\left(l(z,w,{\tau}) \right)\right)^{-\epsilon-|\beta|}}{|1-\langle {\tau},w\rangle|^{1+k-\epsilon}(1-\|w\|^2)^{\epsilon}}dV({\tau}).
			\end{equation}
			Rewriting (\ref{232}) using spherical coordinates {$\tau=rt$ with $r\in [0,1)$ and $t\in \mathbb S^k$} yields:
			\begin{align}\label{240}
			c_k\int_{0}^{1}r^{2k-1}\int_{\mathbb S^k}\frac{\left(-\rho\left(l(z,w,r t\right)\right)^{-\epsilon-|\beta|}}{|1-\langle rt,w\rangle|^{1+k-\epsilon}(1-\|w\|^2)^{\epsilon}}d\sigma (t)dr,
			\end{align}
			where $c_k$ is a constant depending on the dimension $k$.
			
			By Property (e) in Lemma 3.2,  there exists an open neighborhood $\mathcal U$ of $\mathbf b\Omega$ such that for any $z\in \mathcal U$,\begin{equation}\label{0} \sum_{j=1}^{n}z_j\rho_{z_j}(z)>c,\end{equation} for some positive $c$. 
			For $\delta>0$, let $\bar\Omega_{\delta}$ denote the set
			\begin{equation}\label{***}\{z\in \mathbb C^n:\rho((1+\delta)^{\alpha_1/2}z_1,\dots,(1+\delta)^{\alpha_n/2}z_n)\leq0\}.
			\end{equation}
			Then there exists a constant $\delta_0>0$ such that $\Omega-\mathcal U\subseteq \bar \Omega_{\delta_0}$.
			Since $\bar\Omega_{\delta_0}$ is compact in $\Omega$,  we have $(-\rho(z))^{-1}<C$ in $\bar\Omega_{\delta_0}$ for some constant $C$. Let $\mathcal U_0$ denote the set $\bar{\Omega}_{\delta_0}$, and let $\mathcal U_1$ denote the set $\Omega-\mathcal U_0$. Then on $\mathcal U_1$, inequality (\ref{0}) still holds. For $t\in\mathbb S^k$ and $j=0,1$, set $$U_j=\{r\in [0,1]:l(z,w,rt)\in\mathcal U_j\}.$$
			Here $U_j$'s are well-defined for any $t\in \mathbb S^k$: for fixed $z$ and $w$, the value of $l(z,w,rt)$ only depends on $r$ and $\|t\|$.
			For each $U_j$, we set 
			\begin{align}\label{250}
			\mathcal  I^{\beta}_j&=\int_{U_j}r^{2k-1}\int_{\mathbb S^k}\frac{\left(-\rho\left(l(z,w,r t\right)\right)^{-\epsilon-|\beta|}}{|1-\langle rt,w\rangle|^{1+k-\epsilon}(1-\|w\|^2)^{\epsilon}}d\sigma (t)dr.
			\end{align} 
		{We claim that $\mathcal  I^{\beta}_j\lesssim \left((-\rho)(l(z,w,t))(1-|w|^2)\right)^{-\epsilon}$ for each $j$. Then by having
			\begin{equation}
		J_\beta\lesssim \mathcal I^{\beta}_0+\mathcal I^{\beta}_1\lesssim \left((-\rho)(l(z,w,t))(1-|w|^2)\right)^{-\epsilon},
		\end{equation} 
		we complete the proof.}
			
			We first consider $\mathcal I^\beta_0$. Since $(-\rho(l(z,w,rt)))^{-1}<C$ for $r\in U_0$, {we have}
			\begin{align}\label{24}
			\mathcal I^{\beta}_0\lesssim\int_{U_0}r^{2k-1}\int_{\mathbb S^k}\frac{1}{|1-\langle rt,w\rangle|^{1+k-\epsilon}(1-\|w\|^2)^{\epsilon}}d\sigma (t)dr.
			\end{align}
			Applying Lemma 3.4 to the inner integral  of (\ref{24}) yields:
			\begin{equation}\label{241}
			\int_{\mathbb S^k}\frac{1}{|1-\langle rt,w\rangle|^{1+k-\epsilon}(1-\|w\|^2)^{\epsilon}}d\sigma (t)\lesssim(1-\|w\|^2)^{-\epsilon}(1-r^2\|w\|^2)^{\epsilon-1}.
			\end{equation}
			{Then {inequality} (\ref{241}) }gives the desired estimate for $\mathcal I^{\beta}_0$:
			\begin{align}\label{25}
			\mathcal I^{\beta}_0\lesssim&\int_{U_0}r^{2k-1}(1-\|w\|^2)^{-\epsilon}(1-r^2\|w\|^2)^{\epsilon-1}dr\nonumber\\\lesssim&\int_{U_0}r^{2k-1}(1-\|w\|^2)^{-\epsilon}(1-r^2)^{\epsilon-1}dr\nonumber\\\lesssim &(1-\|w\|^2)^{-\epsilon}\nonumber\\\lesssim&\left((-\rho)(l(z,w,t))(1-|w|^2)\right)^{-\epsilon}.
			\end{align}
			
			Now we turn to $\mathcal I^\beta_1$. When $r\in U_1$, we have $l(z,w,rt)\in \mathcal U_1$ and 
			\begin{equation}
			\sum_{j=1}^{n}l_j(z,w,rt)\rho_{z_j}(l(z,w,rt))>c.
			\end{equation}
			For such an $r$, $\frac{\partial}{\partial r} \left((-\rho)^{-\epsilon-|\beta|+1}(l(z,w,rt))\right)$ is controlled from below by $(-\rho)^{-\epsilon-|\beta|}(l(z,w,rt))$:
			\begin{align}\label{29}
			&-\frac{\partial}{\partial r}(-\rho(l(z,w,rt)))^{-\epsilon-|\beta|+1}\nonumber\\=&2(\epsilon+|\beta|-1)(-\rho)^{-\epsilon-|\beta|}(l(z,w,rt))\sum_{j=1}^{n}\frac{\alpha_jr|z_j|(1-r^2)^{\alpha_j/2-1}}{(1-\|w\|^2)^{\alpha_j/2}}\rho_{z_j}(l(z,w,rt))\nonumber\\\gtrsim&\frac{r(-\rho)^{-\epsilon-|\beta|}(l(z,w,rt))}{(1-r^2)}\sum_{j=1}^nl_j(z,w,rt)\rho_{z_j}(l(z,w,rt))\nonumber\\\gtrsim&\frac{r(-\rho)^{-\epsilon-|\beta|}(l(z,w,rt))}{(1-r^2)}.
			\end{align}
			Applying (\ref{29}), (\ref{*}) and Lemma 3.4 to (\ref{250}) then yields:
			\begin{align}\label{253}
			\mathcal I^{\beta}_1\lesssim&-\int_{U_j}r^{2k-2}\int_{\mathbb S^k}\frac{(1-r^2)\frac{\partial}{\partial r}\left(-\rho\left(l(z,w,r t\right)\right)^{-\epsilon-|\beta|+1}}{|1-\langle rt,w\rangle|^{1+k-\epsilon}(1-\|w\|^2)^{\epsilon}}d\sigma (t)dr\nonumber
			\\\lesssim&-\int_{U_j}r^{2k-2}\int_{\mathbb S^k}\frac{\frac{\partial}{\partial r}\left(-\rho\left(l(z,w,r t\right)\right)^{-\epsilon-|\beta|+1}}{|1-\langle rt,w\rangle|^{k-\epsilon}(1-\|w\|^2)^{\epsilon}}d\sigma (t)dr\nonumber
			\\\lesssim&-(1-\|w\|^2)^{-\epsilon}\int_{U_j}r^{2k-2}{\frac{\partial}{\partial r}\left(-\rho\left(l(z,w,r t\right)\right)^{-\epsilon-|\beta|+1}}dr.
			\end{align}
			Since for fixed $z$ and $w$, the point $l(z,w,0)$ is closest to $\mathbf b\Omega$, we may assume $U_1=[0,r_0]$ where $r_0$ depends on both $z$ and $w$.
			
		{When $k=1$, integrating the last line of (\ref{253}) by parts yields
			\begin{align}\label{269}
			 &-(1-\|w\|^2)^{-\epsilon}\int_{U_1}r^{2k-2}{\frac{\partial}{\partial r}\left(-\rho\left(l(z,w,r t)\right)\right)^{-\epsilon-|\beta|+1}}dr\nonumber
			 \\=&-(1-\|w\|^2)^{-\epsilon}\int_{0}^{r_0}{\frac{\partial}{\partial r}\left(-\rho\left(l(z,w,r t)\right)\right)^{-\epsilon-|\beta|+1}}dr\nonumber
			 \\=&-\frac{r^{2k-2}{\left(-\rho\left(l(z,w,r t)\right)\right)^{-\epsilon-|\beta|+1}}\Big|_0^{r_0}}{(1-\|w\|^2)^{\epsilon}}
			 \end{align}
Noting that  $k=1$ also implies $-\epsilon-|\beta|+1\geq -\epsilon-k+1=-\epsilon$, we have
			 	\begin{align}\label{28}
			 	&-(1-\|w\|^2)^{-\epsilon}\int_{U_1}{\frac{\partial}{\partial r}\left(-\rho\left(l(z,w,r t)\right)\right)^{-\epsilon}}dr\nonumber
			 	\\\leq&\frac{{\left(-\rho\left(l(z,w,0)\right)\right)^{-\epsilon}}+{\left(-\rho\left(l(z,w,r_0t)\right)\right)^{-\epsilon}}}{(1-\|w\|^2)^{\epsilon}}.
			 	\end{align}
			 	By its definition, the point $l(z,w,r_0t)$ is in $\mathcal {U}_0$. Therefore $\left(-\rho\left(l(z,w,r_0t\right)\right)^{-\epsilon-|\beta|+1}\lesssim 1$ and the desired estimate follows:
			 	\begin{equation}\label{260}
			 	\mathcal I^{\beta}_1=-\int_{U_1}\frac{\frac{\partial}{\partial r}\left(-\rho\left(l(z,w,r t)\right)\right)^{-\epsilon}}{(1-\|w\|^2)^{\epsilon}}dr\lesssim\left(-\rho\left(f_{\alpha}(z,w)\right)\right)^{-\epsilon}(1-\|w\|^2)^{-\epsilon}.
			 	\end{equation}
			
			When $k>1$, integrating the last line of (\ref{253}) by parts yields
				\begin{align}\label{27}
				&-(1-\|w\|^2)^{-\epsilon}\int_{U_1}r^{2k-2}{\frac{\partial}{\partial r}\left(-\rho\left(l(z,w,r t)\right)\right)^{-\epsilon-|\beta|+1}}dr\nonumber
				\\=&-(1-\|w\|^2)^{-\epsilon}\int_{0}^{r_0}r^{2k-2}{\frac{\partial}{\partial r}\left(-\rho\left(l(z,w,r t)\right)\right)^{-\epsilon-|\beta|+1}}dr\nonumber
				\\=&-\frac{r^{2k-2}{\left(-\rho\left(l(z,w,r t)\right)\right)^{-\epsilon-|\beta|+1}}\Big|_0^{r_0}}{(1-\|w\|^2)^{\epsilon}}+\int_{0}^{r_0}(2k-2)r^{2k-3}\frac{{\left(-\rho\left(l(z,w,r t)\right)\right)^{-\epsilon-|\beta|+1}}}{(1-\|w\|^2)^{\epsilon}}dr.
				\end{align}
			{The numerator of the first term in the last line equals ${r_0^{2k-2}(-\rho(l(z,w,r_0t)))^{-\epsilon-|\beta|+1}}$}, which is also controlled by a constant. Thus it remains to show that
			\begin{equation}\label{30}
			\int_{0}^{r_0}r^{2k-3}\frac{{\left(-\rho\left(l(z,w,r t)\right)\right)^{-\epsilon-|\beta|+1}}}{(1-\|w\|^2)^{\epsilon}}dr\lesssim \left(-\rho\left(f_{\alpha}(z,w)\right)\right)^{-\epsilon}(1-\|w\|^2)^{-\epsilon}.
			\end{equation}}
			Applying (\ref{29}) to the left hand side of (\ref{30}) gives
			$$\int_{0}^{r_0}r^{2k-3}\frac{{\left(-\rho\left(l(z,w,r t)\right)\right)^{-\epsilon-|\beta|+1}}}{(1-\|w\|^2)^{\epsilon}}dr\lesssim-\int_{0}^{r_0}r^{2k-4}\frac{\frac{\partial}{\partial r}{\left(-\rho\left(l(z,w,r t)\right)\right)^{-\epsilon-|\beta|+2}}}{(1-\|w\|^2)^{\epsilon}}dr.$$
			This together with (\ref{27}) implies that for $k>1$
			\begin{align}\label{31}
			&-\int_{0}^{r_0}r^{2k-2}\frac{\frac{\partial}{\partial r}{\left(-\rho\left(l(z,w,r t)\right)\right)^{-\epsilon-|\beta|+1}}}{(1-\|w\|^2)^{\epsilon}}dr\nonumber\\\lesssim&-\int_{0}^{r_0}r^{2k-4}\frac{\frac{\partial}{\partial r}{\left(-\rho\left(l(z,w,r t)\right)\right)^{-\epsilon-|\beta|+2}}}{(1-\|w\|^2)^{\epsilon}}dr.
			\end{align}
		{Since (\ref{31}) holds whenever $|\beta|\leq k$, we have for $0<s\leq k$
				\begin{align}\label{310}
			&-\int_{0}^{r_0}r^{2k-2}\frac{\frac{\partial}{\partial r}{\left(-\rho\left(l(z,w,r t)\right)\right)^{-\epsilon-s+1}}}{(1-\|w\|^2)^{\epsilon}}dr\nonumber\\\lesssim&-\int_{0}^{r_0}r^{2k-4}\frac{\frac{\partial}{\partial r}{\left(-\rho\left(l(z,w,r t)\right)\right)^{-\epsilon-s+2}}}{(1-\|w\|^2)^{\epsilon}}dr.
			\end{align}
			Repeated use of inequality (\ref{310}) then gives
			\begin{align}\label{32}
			&-\int_{0}^{r_0}r^{2k-2}\frac{\frac{\partial}{\partial r}{\left(-\rho\left(l(z,w,r t)\right)\right)^{-\epsilon-|\beta|+1}}}{(1-\|w\|^2)^{\epsilon}}dr\nonumber\\\lesssim&-\int_{0}^{r_0}r^{2k-4}\frac{\frac{\partial}{\partial r}{\left(-\rho\left(l(z,w,r t)\right)\right)^{-\epsilon-|\beta|+2}}}{(1-\|w\|^2)^{\epsilon}}dr\nonumber\\&\;\;\;\;\;\;\;\;\;\;\;\;\;\;\;\;\;\;\;\;\;\;\;\;\;\;\;\;\;\nonumber\vdots\\\lesssim&-\int_{0}^{r_0}r^{2k-2|\beta|}\frac{\frac{\partial}{\partial r}{\left(-\rho\left(l(z,w,r t)\right)\right)^{-\epsilon}}}{(1-\|w\|^2)^{\epsilon}}dr
			\end{align}
			Noting that $r^{2k-2|\beta|}$ is bounded on $[0,r_0]$, we have
				\begin{equation}\label{320}
			-\int_{0}^{r_0}r^{2k-2|\beta|}\frac{\frac{\partial}{\partial r}{\left(-\rho\left(l(z,w,r t)\right)\right)^{-\epsilon}}}{(1-\|w\|^2)^{\epsilon}}dr\leq-\int_{0}^{r_0}\frac{\frac{\partial}{\partial r}{\left(-\rho\left(l(z,w,r t)\right)\right)^{-\epsilon}}}{(1-\|w\|^2)^{\epsilon}}dr.
				\end{equation}
Applying inequality (\ref{260}) to (\ref{320}) then yields
	 \begin{equation}\mathcal I^{\beta}_1=-\int_{0}^{r_0}r^{2k-2}\frac{\frac{\partial}{\partial r}{\left(-\rho\left(l(z,w,r t)\right)\right)^{-\epsilon-|\beta|+1}}}{(1-\|w\|^2)^{\epsilon}}dr\lesssim\left(-\rho\left(f_{\alpha}(z,w)\right)\right)^{-\epsilon}(1-\|w\|^2)^{-\epsilon},
	 \end{equation}
	 which completes the proof.}
		\end{proof}
{\paragraph{\bf Remark}As in the proof of Thereom 3.3, we can obtain an $L^p$ regularity result for the Bergman projection on more generalized domains which are generated from $\Omega$ by iterating the construction of $U^{\alpha}$ from (\ref{U}).
		
		Set $\alpha=(\alpha^{(1)},\dots,\alpha^{(l)})\in\mathbb R^n_+\times\dots\times\mathbb R^n_+$ where each $\alpha^{(j)}$ is in $\mathbb R^n_+$. Let $k_1,\dots,k_l$ be $l$ positive integers. The successor  $\mathbf U(\Omega)$ is defined by
		\begin{equation}\label{33}
				\mathbf U(\Omega)=\{(z,w_1,w_2\cdots,w_l)\in \mathbb C^n\times\mathbb B^{k_1}\times\cdots\times\mathbb B^{k_l}:(\mathbf f_{\alpha}(z,w_1,\dots,w_l))\in \Omega\},
		\end{equation}
		where 
		\begin{equation}\label{34}
		\mathbf f_{\alpha}\left(z,w_1,\dots,w_l\right)=\left(\frac{z_1}{\prod_{j=1}^{l}(1-\|w_j\|^2)^{\frac{\alpha^{(j)}_1}{2}}},\dots,\frac{z_n}{\prod_{j=1}^{l}(1-\|w_j\|^2)^{\frac{\alpha^{(j)}_n}{2}}}\right).
		\end{equation}
			Suppose $\Omega\subseteq \mathbb C^n$ is a smooth {complete Reinhardt} domain with defining function $\rho$ and  $D^{\beta}_zK_\Omega(z;\bar{\zeta})$ is $(-\rho)$-regular of type $|\beta|$ for $0\leq |\beta|\leq \sum_{j=1}^{l}k_j$.
			Then the Bergman projection on $\mathbf U(\Omega)$ is $L^p$ bounded for all $1< p< \infty$.
		The proof of this statement is similar to the proof for the first successor. We omit it here.}
\bibliographystyle{alpha}
\bibliography{1}

\begin{thebibliography}{NRSW88}

\bibitem[B{\c{S}}12]{BS}
D.~Barrett and S.~{\c{S}}ahuto\u{g}lu.
\newblock Irregularity of the {B}ergman projection on worm domains in $\mathbb
  {C}^n$.
\newblock {\em Michigan Math. J.}, 61(1):187--198, 2012.

\bibitem[CD06]{CD}
P.~Charpentier and Y.~Dupain.
\newblock Estimates for the {B}ergman and {S}zeg\"o projections for
  pseudoconvex domains of finite type with locally diagonalizable {L}evi form.
\newblock {\em Publ. Mat.}, 50(2):413--446, 2006.

\bibitem[Che17]{CHEN}
L.~Chen.
\newblock The {$L^p$} boundedness of the {B}ergman projection for a class of
  bounded {H}artogs domains.
\newblock {\em J. Math. Anal. Appl.}, 448(1):598--610, 2017.

\bibitem[CZ16]{DebrajY}
D.~Chakrabarti and Y.~Zeytuncu.
\newblock ${L}^p$ mapping properties of the {B}ergman projection on the
  {H}artogs triangle.
\newblock {\em Proc. Amer. Math. Soc.}, 144(4):1643--1653, 2016.

\bibitem[EL08]{EL}
D.~Ehsani and I.~Lieb.
\newblock {$L^p$}-estimates for the {B}ergman projection on strictly
  pseudoconvex non-smooth domains.
\newblock {\em Math. Nachr.}, 281(7):916--929, 2008.

\bibitem[EM16]{EM}
L.~D. Edholm and J.~D. McNeal.
\newblock The {B}ergman projection on fat {H}artogs triangles: {$L^p$}
  boundedness.
\newblock {\em Proc. Amer. Math. Soc.}, 144(5):2185--2196, 2016.

\bibitem[EM17]{EM2}
L.~D. Edholm and J.~D. McNeal.
\newblock {B}ergman subspaces and subkernels: Degenerate ${L}^p$ mapping and
  zeroes.
\newblock {\em J. Geom. Anal.}, 2017.
\newblock \url{http://doi.org/10.1007/s12220-017-9777-4}.

\bibitem[Fef74]{Fefferman}
C.~Fefferman.
\newblock The {Bergman} kernel and biholomorphic mappings of pseudoconvex
  domains.
\newblock {\em Invent. Math.}, 26:1--65, 1974.

\bibitem[Huo17]{Zhenghui}
Z.~Huo.
\newblock The {B}ergman kernel on some {Hartogs} domains.
\newblock {\em J. Geom. Anal.}, 27(1):271--299, 2017.

\bibitem[LS12]{LS}
L.~Lanzani and E.~M. Stein.
\newblock The {B}ergman projection in {$L^p$} for domains with minimal
  smoothness.
\newblock {\em Illinois J. Math.}, 56(1):127--154 (2013), 2012.

\bibitem[McN89]{McNeal1}
J.~D. McNeal.
\newblock Boundary behavior of the {Bergman} kernel function in {$\mathbb
  C^2$}.
\newblock {\em Duke Math. J.}, 58(2):499--512, 1989.

\bibitem[McN94a]{McNeal3}
J.~D. McNeal.
\newblock The {B}ergman projection as a singular integral operator.
\newblock {\em J. Geom. Anal.}, 4(1):91--103, 1994.

\bibitem[McN94b]{McNeal2}
J.~D. McNeal.
\newblock Estimates on the {Bergman} kernels of convex domains.
\newblock {\em Adv. Math.}, 109(1):108--139, 1994.

\bibitem[MS94]{MS}
J.~D. McNeal and E.~M. Stein.
\newblock Mapping properties of the {B}ergman projection on convex domains of
  finite type.
\newblock {\em Duke Math. J.}, 73(1):177--199, 1994.

\bibitem[NRSW88]{NRSW}
A.~Nagel, J.~Rosay, E.~M. Stein, and S.~Wainger.
\newblock Estimates for the {B}ergman and {S}zeg{\H o} kernels in certain
  weakly pseudoconvex domains.
\newblock {\em Bull. Amer. Math. Soc. (N.S.)}, 18(1):55--59, 1988.

\bibitem[PS77]{PS}
D.~H. Phong and E.~M. Stein.
\newblock Estimates for the {B}ergman and {S}zeg\"o projections on strongly
  pseudo-convex domains.
\newblock {\em Duke Math. J.}, 44(3):695--704, 1977.

\bibitem[Zey13]{Yunus}
Y.~Zeytuncu.
\newblock ${L}^p$ regularity of weighted {B}ergman projections.
\newblock {\em Trans. Amer. Math. Soc.}, 365(6):2959--2976, 2013.

\bibitem[Zhu05]{Zhu}
K.~Zhu.
\newblock {\em Spaces of holomorphic functions in the unit ball}, volume 226 of
  {\em Graduate Texts in Mathematics}.
\newblock Springer-Verlag, New York, 2005.

\end{thebibliography}

\end{document}